\newtheorem{Le}{Lemma}[section]
\newtheorem{St}[Le]{Proposition}
\newtheorem{Th}{Theorem}[section]
\newtheorem{Rem}[Le]{Remark}
\numberwithin{equation}{section}
\newcommand{\R}{\mathbb{R}}
\newcommand{\Co}{\mathbb{C}}
\newcommand{\eps}{\varepsilon}
\newcommand{\B}{\mathbb{B}}
\newcommand{\eq}[1]{\begin{equation}{#1}\end{equation}}
\newcommand{\mlt}[1]{\begin{multline}{#1}\end{multline}}
\newcommand{\alg}[1]{\begin{align}{#1}\end{align}}
\newcommand{\set}[2]{\{{#1}\mid{#2}\}}
\newcommand{\Set}[2]{\Big\{{#1}\,\Big|\;{#2}\Big\}}
\newcommand{\scalprod}[2]{\langle{#1},{#2}\rangle}
\newcommand{\Scalprod}[2]{\Big\langle{#1},{#2}\Big\rangle}
\newcommand{\fdot}{\,\cdot\,}
\DeclareMathOperator{\BV}{BV}
\DeclareMathOperator{\I}{I}
\DeclareMathOperator{\J}{J}
\DeclareMathOperator{\dist}{dist}
\DeclareMathOperator{\clos}{clos}
\DeclareMathOperator{\Dil}{Dil}
\newcommand{\vp}{\mathrm{v.p.}}
\newcommand{\Image}{\mathrm{Im}}
\newcommand{\Me}{\boldsymbol{\mathrm{M}}}
\title{Weakly canceling operators and singular integrals}
\author{Dmitriy Stolyarov\thanks{Supported by the Russian Science Foundation grant N. 19-71-30002.}\;\thanks{I would like to thank Bogdan Raita for introducing me to his results, fruitful discussions, and exposition advice.}}
\begin{document}
\maketitle
\begin{abstract}
We suggest an elementary Harmonic Analysis approach to canceling and weakly canceling differential operators, which allows to extend these notions to anisotropic setting and also replace differential operators with Fourier multiplies with mild smoothness regularity. In this more general setting of anisotropic Fourier multipliers, we prove the inequality~$\|f\|_{L_{\infty}} \lesssim \|Af\|_{L_1}$ if~$A$ is a weakly canceling operator of order~$d$ and the inequality~$\|f\|_{L_2} \lesssim \|Af\|_{L_1}$ if~$A$ is a canceling operator  of order~$\frac{d}{2}$, provided~$f$ is a function in~$d$ variables. 
\end{abstract}

\section{Introduction}
Consider two similar inequalities for smooth compactly supported functions in two variables:
\eq{\label{Basic}
\|f\|_{L_2} \lesssim \|\nabla f\|_{L_1(\R^2)};\qquad \|\nabla f\|_{L_2} \lesssim \|\Delta f\|_{L_1(\R^2)}.
} 
Both inequalities are dilation invariant. The first one was proved by Gagliardo and Nirenberg in late 50s and the second one is false. The reason for this phenomenon is that~$\Delta f$ may be almost an arbitrary~$L_1$ function whereas the vectorial function~$\nabla f$ possesses hidden cancellations that distinguish it from an arbitrary~$L_1$ function. In~\cite{VanSchaftingen2013}, Van Schaftingen gave a simple necessary and sufficient condition on a differential operator on the right hand side of such type inequalities for the inequality to be true (we refer the reader to the early papers~\cite{BourgainBrezis2002}, \cite{BourgainBrezis2007}, \cite{VanSchaftingen2004}, \cite{VanSchaftingen2008}, and the survey~\cite{VanSchaftingen2014} for historical information about the preceding development of the subject). Let~$A$ be a differential operator that maps~$V$-valued functions into~$E$-valued functions, where~$V$ and~$E$ are finite dimensional spaces over~$\Co$. We will denote the symbol of~$A$ (which is a polynomial in~$d$ variables with values in the space of linear operators from~$V$ to~$E$) by the same letter~$A$. Assume~$A$ is elliptic, which in this setting means that the kernel of the linear operator~$A(\xi)$ is trivial for any~$\xi \in \R^d\setminus \{0\}$, and homogeneous of order~$m$. The operator~$A$ is called canceling provided
\eq{\label{Cancel}
\bigcap\limits_{\zeta \in S^{d-1}} \Image (A(\zeta)) = \{0\};
}
here~$S^{d-1}$ denotes the unit sphere. Note that, in particular, this is possible when~$\dim E > \dim V$ only. In other words, the corresponding system of partial differential equations~$A f = u$ is overdetermined. In~\cite{VanSchaftingen2013}, it was proved that an elliptic homogeneous of order~$m$ operator~$A$ is canceling if and only if
\eq{\label{canceling1}
\|\nabla^{m-1} f\|_{L_{\frac{d}{d-1}}} \lesssim \|A f\|_{L_1(\R^d,E)}.
}
Here and in what follows, the notation~$A \lesssim B$ means~$A \leq CB$, where the constant~$C$ may be chosen uniformly with respect to parameters that are clear from the context. For example, the constant in the inequality above  should be independent of~$f$. The space~$L_1(\R^d, E)$ is the space of summable~$E$-valued functions on~$\R^d$ equipped with the norm
\eq{
\|g\|_{L_1(\R^d,E)} = \int\limits_{\R^d}\|g(x)\|_E\,dx;
}
the particular choice of the norm on~$E$ is not important. By the classical Sobolev embedding, inequality~\eqref{canceling1} implies
\eq{\label{canceling2}
\|\nabla^{m-n} f\|_{L_{\frac{d}{d-n}}} \lesssim \|A f\|_{L_1(\R^d,E)}
}
whenever~$1 \leq n \leq m$ and~$n < d$. The case~$n=d$ (with additional assumption~$m \geq d$) is special; we postpone the discussion of this case. 

Van Schaftingen's theorem may be restated in another way. Consider the space
\alg{\label{SpacesPolynom}
W_1^{A} = \clos_{L_1(\R^d,E)}\set{Af}{f \in C_0^{\infty}(\R^d,V)},
}
which may be thought of as  a generalization of the classical homogeneous Sobolev space~$\dot{W}_1^1$: the space~$W_1^A$ becomes~$\dot{W}^1_1$ in the case~$A = \nabla$.  The space~\eqref{SpacesPolynom} inherits the norm from~$L_1(\R^d,E)$. In the case~$n < d$, inequality~\eqref{canceling2} is equivalent via the Calder\'on--Zygmund theory to
\eq{\label{canceling2}
\|\I_{n}[g]\|_{L_{\frac{d}{d-n}}} \lesssim \|g\|_{W_1^A};
}
where~$\I_\alpha$ is the Riesz potential of order~$\alpha \in (0,d)$, i.e. the Fourier multiplier with the symbol~$|\fdot|^{-\alpha}$. The space~\eqref{SpacesPolynom} has a good description in terms of the Fourier transform:
\eq{\label{Intermediatespace}
W_1^{A} = \Set{g\in L_1(\R^d,E)}{\forall \xi \in \R^d\setminus \{0\}\quad \hat{g}(\xi) \subset \Image(A(\xi))}.
}
We note that, by ellipticity of~$A$, the linear spaces~$A(\xi)$ have dimension~$\dim V$ that does not depend on~$\xi$. The paper~\cite{Raita2018} extends Van Schaftingen's theory to the setting of differential operators of constant rank; this class of operators is wider than the class of elliptic operators, in this case the inequality~\eqref{canceling2} is also true provided one modifies the function on the left hand side in an appropriate way. The representation~\eqref{Intermediatespace} is suitable for generalizations.  We borrow the notation from~\cite{ASW2018}.

Let us identify~$E$ with~$\Co^\ell$, where $\dim E = \ell$. By~$G(\ell,k)$ we denote the complex Grassmannian of~$k$-dimensional subspaces of~$\Co^\ell$, i.e. the collection of all~$k$-dimensional linear subspaces of~$\Co^\ell$. This set has natural metric space structure. We may identify~$L\in G(\ell,k)$ with the orthogonal projection~$\pi_L$ of~$\Co^\ell$ onto~$L$, thus embedding~$G(\ell,k)$ into the space of~$\ell\times\ell$ matrices (which equals~$\Co^{\ell^2}$). The set~$G(\ell,k)$ inherits Euclidean metric from the ambient space: for any~$L_1, L_2 \in G(\ell,k)$, the quantity~$\|\pi_{L_1} - \pi_{L_2}\|_{\mathrm{HS}}$ plays the role of the distance between~$L_1$ and~$L_2$; the norm~$\|\cdot\|_{\mathrm{HS}}$ is the Hilbert--Schmidt norm on the space of~$\ell \times \ell$ complex matrices. Fix~$\ell$ and~$k$ and let~$\Omega \colon S^{d-1}\to G(\ell,k)$ be a continuous function, define
\eq{\label{IsotropicOmega}
W_1^\Omega = \Set{g\in L_1(\R^d,\Co^\ell)}{\forall \xi \in \R^d\setminus \{0\}\quad \hat{g}(\xi) \subset \Omega\Big(\frac{\xi}{|\xi|}\Big)}.
}
By a H\"older continuous mapping between metric spaces, we mean a function~$F\colon M \to N$ such that
\eq{
\dist_N (F(x),F(y)) \lesssim \dist_M (x,y)^{\alpha}
} 
from some uniform~$\alpha > 0$ and all~$x,y\in M$ such that~$ \dist_M (x,y) \leq 1$.
\begin{Th}\label{L2Theorem}
Let~$\Omega$ be H\"older continuous and canceling in the sense that
\eq{\label{CancelBundle}
\bigcap_{\zeta \in S^{d-1}}\Omega(\zeta) = \{0\}.
}
Then\textup,~$\I_{\frac{d}{2}}$ maps~$W_1^\Omega$ to~$L_2$ continuously.
\end{Th}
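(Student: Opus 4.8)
The plan is to work on the Fourier side. By Plancherel's theorem the assertion is equivalent to the quadratic estimate $\int_{\R^d}|\xi|^{-d}|\hat g(\xi)|^2\,d\xi \lesssim \|g\|_{L_1}^2$ for $g \in W_1^\Omega$; passing to polar coordinates $\xi = r\zeta$ this reads $\int_{S^{d-1}}\int_0^\infty r^{-1}|\hat g(r\zeta)|^2\,dr\,d\sigma(\zeta) \lesssim \|g\|_{L_1}^2$. For a fixed $\zeta$ the profile $r \mapsto \hat g(r\zeta)$ is the one-dimensional Fourier transform of the Radon projection $\phi_\zeta(t) = \int_{\{\zeta\cdot x = t\}} g\,d\mathcal H^{d-1}$, so $\phi_\zeta \in L_1(\R)$ with $\|\phi_\zeta\|_{L_1(\R)} \le \|g\|_{L_1}$, and — the key structural input — $\phi_\zeta$ takes values in the single subspace $\Omega(\zeta)$ for every $r$. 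Thus the inner integral is $\|\phi_\zeta\|_{\dot H^{-1/2}(\R)}^2$ and the target becomes $\int_{S^{d-1}}\|\phi_\zeta\|_{\dot H^{-1/2}(\R)}^2\,d\sigma(\zeta) \lesssim \|g\|_{L_1}^2$. Since a single slice $\phi_\zeta$ need not lie in $\dot H^{-1/2}(\R)$ at all, this is genuinely an averaged estimate: the cancellation has to couple distinct directions, not act pointwise in $\xi$. (It does already force $\int g = 0$: by continuity $\hat g(0) = \lim_{r\to 0}\hat g(r\zeta) \in \Omega(\zeta)$ for each $\zeta$, hence $\hat g(0) \in \bigcap_\zeta \Omega(\zeta) = \{0\}$, which is the bare minimum needed for the $r \to 0$ part to converge.)

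The cancellation will enter through a coercivity lemma I would prove first: because $\Omega$ is continuous and $\bigcap_\zeta \Omega(\zeta) = \{0\}$, the averaged projection $P = \sigma(S^{d-1})^{-1}\int_{S^{d-1}} \pi_{\Omega(\eta)}\,d\sigma(\eta)$ is a positive contraction with $\|P\| < 1$, whence $|v|^2 \lesssim \int_{S^{d-1}} |\pi_{\Omega(\eta)^\perp}v|^2\,d\sigma(\eta)$ for all $v \in \Co^\ell$ (equivalently one may work with a finite set $\zeta_1,\dots,\zeta_N$ with $\bigcap_i \Omega(\zeta_i) = \{0\}$, furnished by compactness). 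Applying this with $v = \hat g(r\zeta)$, and using that $\hat g(\rho\eta) \in \Omega(\eta)$ for every $\rho > 0$, so that $\pi_{\Omega(\eta)^\perp}\hat g(r\zeta) = \pi_{\Omega(\eta)^\perp}(\hat g(r\zeta) - \hat g(\rho\eta))$, yields a Poincar\'e-type bound $|\hat g(r\zeta)|^2 \lesssim \int_{S^{d-1}} \inf_{\rho > 0}|\hat g(r\zeta) - \hat g(\rho\eta)|^2\,d\sigma(\eta)$ on the sphere $rS^{d-1}$, trading the size of $\hat g$ for an oscillation. The oscillation is estimated by $|\hat g(r\zeta) - \hat g(\rho\eta)| \le \int |g(x)|\,\min(2,\, |r\zeta - \rho\eta||x|)\,dx$, and for the near-diagonal directions H\"older continuity of $\Omega$ gives the sharper bound $|\pi_{\Omega(\eta)^\perp}\hat g(r\zeta)| = |\pi_{\Omega(\eta)^\perp}(\pi_{\Omega(\zeta)} - \pi_{\Omega(\eta)})\hat g(r\zeta)| \lesssim |\zeta - \eta|^\alpha |\hat g(r\zeta)|$; together these should give decay of $\int_{S^{d-1}}|\hat g(r\zeta)|^2\,d\sigma(\zeta)$ as $r \to 0$, at a rate tied to the H\"older exponent $\alpha$ of $\Omega$.

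Summing such bounds against $dr/r$ is where the real difficulty lies, and I expect it to be the main obstacle. Run naively, the argument gives at the low end only a rate like $\min(1,(rA)^2)$ with $A$ a spatial scale of $g$ (a support radius or a moment scale), and at the high end no rate at all, since $|\hat g(r\zeta) - \hat g(\rho\eta)|$ saturates at $2\|g\|_{L_1}$ for large $r$; and the inequality is dilation invariant, so no fixed scale may be privileged. The fix I would pursue is a multiscale, Littlewood--Paley implementation: decompose $g$ dyadically in space and split $\hat g$ into frequency annuli $|\xi| \sim 2^j$, and prove that for the spatial piece of $g$ of scale $\sim 2^{-j_0}$ membership in $W_1^\Omega$ improves the trivial Bernstein estimate $\|\Delta_j g\|_{L_2} \lesssim 2^{jd/2}\|g\|_{L_1}$ by a summable factor $2^{-\eps|j - j_0|}$ with $\eps = \eps(\alpha) > 0$, so that summing geometric series over $j$ and over the spatial pieces produces $\sum_j 2^{-jd}\|\Delta_j g\|_{L_2}^2 \lesssim \|g\|_{L_1}^2$. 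The two delicate points are keeping every constant scale invariant and extracting a genuine quantitative gain from the merely H\"older — not smooth — dependence of $\Omega$ on direction; it is exactly here that the hypothesis that $\Omega$ be H\"older continuous (rather than just continuous) is used, and here that the necessity of the cancellation hypothesis is felt, the inequality being plainly false without it. A routine density argument then extends the estimate from $g$ for which $\I_{\frac d2}[g]$ is a priori in $L_2$ to all of $W_1^\Omega$.
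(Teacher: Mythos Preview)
Your proposal has a genuine gap. The initial reductions --- Plancherel, the coercivity lemma from $\bigcap_\zeta \Omega(\zeta)=\{0\}$, the observation $\hat g(0)=0$ --- are all correct, but they do not by themselves yield the estimate, as you yourself acknowledge. The decisive step, the multiscale/Littlewood--Paley argument, is only asserted, and it faces a real obstruction: cutting $g$ dyadically in space destroys the Fourier-side constraint $\hat g(\xi)\in\Omega(\xi/|\xi|)$, so the individual spatial pieces are no longer in $W_1^\Omega$ and none of your coercivity or Poincar\'e machinery applies to them. Without spatial localization there is no control on the modulus of continuity of $\hat g$ near the origin (for $g\in L_1$ the transform $\hat g$ is merely uniformly continuous, not H\"older), and at high frequencies you have offered no mechanism at all --- the constraint $\hat g(\xi)\in\Omega(\xi/|\xi|)$ is a pointwise condition on a single sphere and does not by itself force decay of $|\hat g|$ along rays. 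The ``improved Bernstein estimate by a factor $2^{-\eps|j-j_0|}$'' is therefore unsubstantiated precisely where it matters.

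The paper's proof is completely different and much shorter. One bilinearizes the quadratic estimate to
\[
\Big|\int \eta^{-d}(\xi)\,\scalprod{\hat f(\xi)}{\hat g(\xi)}\,d\xi\Big|\ \lesssim\ \|f\|_{W_1^\Omega}\|g\|_{L_1(\R^d,\Co^\ell)},
\]
and rewrites the left side as $\sum_j\int \B_j[f]\,\overline{g_j}$, where $\B_j$ is the multiplier with symbol $|\xi|^{-d}(v\mapsto v_j)$ on $\Omega(\zeta)$. Since $V=\{0\}$, the weak cancellation condition~\eqref{Weaklycanceling} is vacuous, so Theorem~\ref{LinftyTheorem} gives $\|\B_j[f]\|_{L_\infty}\lesssim\|f\|_{W_1^\Omega}$ and the bilinear bound follows. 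The heart of the matter is thus Theorem~\ref{LinftyTheorem}, whose proof rests on an \emph{extension lemma}: one extends each functional $B(\zeta)$ from $\Omega(\zeta)$ to all of $\Co^\ell$, H\"older continuously in $\zeta$ and with vanishing spherical mean; the extended symbol $|\xi|^{-d}\tilde B(\xi/|\xi|)$ is then a classical Calder\'on--Zygmund kernel and Mikhlin's principle gives the $L_1\to L_\infty$ bound at once. The cancellation hypothesis enters exactly once, to guarantee that such a mean-zero extension exists. No multiscale analysis is needed.
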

It was conjectured in~\cite{ASW2018} that if~$\Omega$ is smooth and canceling in the same sense as above, then~$\I_{\alpha}$ maps~$W_1^\Omega$ to~$L_{\frac{d}{d-\alpha}}$ continuously. Theorem~\ref{L2Theorem} confirms the conjecture for~$\alpha  \geq \frac{d}{2}$. In particular, Theorem~\ref{L2Theorem} provides an alternative proof of Van Schaftingen's result~\eqref{canceling1} in dimension~$d=2$. Let us now return to the case~$\alpha = d$ and the language of differential operators for a moment. It appears that the cancellation condition~\eqref{Cancel} is only sufficient for the inequality
\eq{\label{canceling3}
\|\nabla^{m-d} f\|_{L_{\infty}} \lesssim \|A f\|_{L_1(\R^d,E)}.
}
In~\cite{Raita2019}, Raita gave necessary and sufficient conditions on~$A$ for this inequality to be true, the corresponding (elliptic) operators are called weakly canceling (similar effect is also present in the setting of constant rank operators, see~\cite{Raita2018}). Note that in this case the Calder\'on--Zygmund theory is not applicable directly, so the result depends not only on the collection~$\{\Image(A(\zeta))\}_{\zeta \in S^{d-1}}$, but on the structure of~$A$ as well. Now let us return to the Fourier transform notation. 

Consider the set~$\omega$ of pairs~$(\zeta,v)$, where~$\zeta \in S^{d-1}$ and~$v\in \Omega(\zeta)$ and a function~$B \colon \omega \to \mathbb{C}$. The function~$B$ is assumed to be linear with respect to the second variable and H\"older continuous as a mapping of~$\omega \subset S^{d-1}\times \Co^\ell$ to~$\Co$. In other words,~$B(\zeta,\cdot)$ is a linear functional on~$\Omega(\zeta)$ that depends in a H\"older continuous way on~$\zeta$. We prefer to write~$B(\zeta)[v]$ instead of~$B(\zeta,v)$ since the two arguments have different nature. Consider the vectorial Fourier multiplier~$\B$ acting on functions~$g \in W_1^\Omega$ by the rule
\eq{\label{Functional}
\B[g] = \mathcal{F}^{-1}\Big[|\xi|^{-d}B\Big(\frac{\xi}{|\xi|}\Big)\big[\hat{g}(\xi)\big]\Big],\qquad g\in W_1^\Omega. 
}
This definition requires explanations. First, we apply the functional~$B(\zeta)$,~$\zeta = \xi/|\xi|$, to a vector~$\hat{g}(\xi)$, which, by the assumption~$g\in W_1^\Omega$, belongs to the domain of~$B$, that is~$\Omega(\zeta)$. Second, even if the formula is logically correct, it is not clear why the function on the right hand side is even a distribution since its Fourier transform might have a singularity~$|\xi|^{-d}$ at the origin. There are two ways to get around this peculiarity: either regularize the symbol of our multiplier or consider only good functions~$g$ that form a dense set in~$W_1^\Omega$.  While we will follow the former way in Section~\ref{S2}, we provide a short comment about the latter. It is not clear whether the Schwartz functions are dense in~$W_1^\Omega$ when~$\Omega$ is not smooth; for smooth~$\Omega$ this is true, see~\cite{AyoushWojciechowski2017} (though this density statement is not formulated in that paper, the notation and definitions there make the statement almost obvious). However, the set of functions~$g\in W_1^\Omega$ whose spectrum is compact and does not contain the origin, is dense in~$W_1^\Omega$; one may use the formula~\eqref{Functional} for such functions~$g$ only and then pass to the limit whenever an appropriate uniform estimate is proved.
\begin{Th}\label{LinftyTheorem}
Let~$\Omega$ and~$B$ be H\"older continuous and let~$B$ be weakly canceling\textup, that is
\eq{\label{Weaklycanceling}
\forall v\in V\quad \int\limits_{S^{d-1}} B(\zeta)[v]\,d\sigma(\zeta) = 0,\qquad V = \bigcap_{\zeta \in S^{d-1}} \Omega(\zeta).
}
Then\textup, the operator~$\B$ given by~\eqref{Functional} maps~$W_1^\Omega$ to~$L_{\infty}$ continuously.
\end{Th}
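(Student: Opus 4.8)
The plan is to reduce the $L_\infty$ bound to a pointwise estimate on the convolution kernel. By density, it suffices to prove $\|\B[g]\|_{L_\infty} \lesssim \|g\|_{W_1^\Omega}$ for $g \in W_1^\Omega$ with compact spectrum avoiding the origin; for such $g$ the formula~\eqref{Functional} is literally an $L_1$-function convolved against the tempered distribution $K = \mathcal F^{-1}[|\xi|^{-d} B(\xi/|\xi|)[\cdot]]$, acting on the (projection-valued) structure so that $\B[g](x) = \int_{\R^d} K(x-y)\,\hat g\text{-adapted}\,g(y)\,dy$. The key point is that $|\xi|^{-d}$ is exactly the critical homogeneity for which the inverse Fourier transform is $\log|x|$ up to lower-order terms, so $K$ is \emph{not} bounded, but the \emph{weak cancellation} hypothesis~\eqref{Weaklycanceling} is precisely what kills the logarithmic blow-up \emph{when $K$ is paired against the components of $g$ that can be nonzero}, i.e. against vectors lying in every $\Omega(\zeta)$.

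First I would regularize: replace the symbol by $|\xi|^{-d}\chi(\xi)B(\xi/|\xi|)$ and $|\xi|^{-d}(1-\chi(\xi))B(\xi/|\xi|)$ where $\chi$ is a smooth cutoff near the origin, and treat the high-frequency part first. Away from the origin the symbol $m(\xi) = (1-\chi(\xi))|\xi|^{-d}B(\xi/|\xi|)$ is a bounded Hölder-continuous function decaying like $|\xi|^{-d}$, so its inverse Fourier transform is a bounded continuous kernel (this is the standard fact that an $L^1$-type decay plus mild regularity gives a bounded kernel — one splits dyadically and sums); hence this piece maps $L_1 \to L_\infty$ trivially, with no cancellation needed. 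The whole subtlety is the low-frequency piece $m_0(\xi) = \chi(\xi)|\xi|^{-d}B(\xi/|\xi|)$. Here I would compute $K_0 = \mathcal F^{-1}[m_0]$ and show that, although $K_0(x) \sim c\log\frac1{|x|} + (\text{bounded})$ as $x\to 0$ and grows logarithmically as $x \to \infty$, the coefficient $c = c(x/|x|)$ of the unbounded part is obtained by integrating $B(\zeta)[\cdot]$ over the sphere (after the angular decomposition $|\xi|^{-d} = r^{-d}$, $d\xi = r^{d-1}\,dr\,d\sigma(\zeta)$, the radial integral $\int r^{-1}\,dr$ produces the log and its coefficient is $\int_{S^{d-1}} B(\zeta)[\cdot]\,d\sigma(\zeta)$, independent of $x$). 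Since $g \in W_1^\Omega$ forces $\hat g(\xi) \in \Omega(\xi/|\xi|) \subset \bigcap_\zeta \Omega(\zeta) = V$ only in the limit $\xi \to 0$ — more precisely, I would argue that the relevant obstruction is the value of the multiplier applied to the $V$-component, and weak cancellation~\eqref{Weaklycanceling} says exactly $\int_{S^{d-1}} B(\zeta)|_V\,d\sigma = 0$ — the logarithmically unbounded part of $K_0$ annihilates precisely the directions in which $\hat g$ survives near the origin. Concretely: write $K_0(x)[\hat g(\xi)]$ and use that $\hat g(\xi) - \pi_V\hat g(\xi) \to 0$ with a Hölder rate as $\xi\to 0$ to absorb the non-$V$ part into a convergent (bounded-kernel) remainder, while on $V$ the log-coefficient vanishes by hypothesis.

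To make this rigorous I would introduce the auxiliary scalar function: for the $V$-part, $\B[g]$ restricted to $V$ has symbol $\chi(\xi)|\xi|^{-d}(B(\xi/|\xi|)[v] - \text{avg})$, and I would expand $B(\xi/|\xi|)[v] = \text{avg}(v) + (\text{Hölder remainder of order }\alpha)$; the average term vanishes by~\eqref{Weaklycanceling}, and the remainder gives a symbol bounded by $|\xi|^{-d+\alpha}$ near the origin, whose inverse transform is a bounded kernel. The non-$V$ part requires the observation that for $g \in W_1^\Omega$, the quantity $\|\pi_{V^\perp}\hat g(\xi)\|$ is controlled by $\dist(\Omega(\xi/|\xi|), V)$-type quantities — but here one must be careful, since $V = \bigcap_\zeta \Omega(\zeta)$ need not equal $\Omega(\zeta)$ for any single $\zeta$. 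The cleanest route is to not decompose $\hat g$ but instead to observe that the full operator $\B$ has a kernel $K(x,y)$ (not translation-invariant once we track the $\Omega$-bundle, but effectively so) whose size we estimate directly: $\|\B[g]\|_{L_\infty} \le \sup_x \int |K(x-y)|\,|g(y)|\,dy$, and then show $\|K\|_{L_\infty} < \infty$ after the cancellation is used. I expect the main obstacle to be exactly this bookkeeping — cleanly separating ``the part of the symbol near $\xi = 0$ that sees only $V$ and is killed by weak cancellation'' from ``the part that sees $V^\perp$ and must be handled by continuity of $\Omega$ and the constraint $\hat g(\xi) \in \Omega(\xi/|\xi|)$'' — since the constraint couples $\hat g$ to $\Omega$ pointwise in $\xi$ rather than giving a clean projection onto $V$. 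A convenient technical device to handle this is the identity, valid for $g \in W_1^\Omega$, that $B(\xi/|\xi|)[\hat g(\xi)] = \big(B(\xi/|\xi|) - \Lambda\big)[\hat g(\xi)]$ for \emph{any} fixed functional $\Lambda$ agreeing with $B(\zeta)$ on $V$ for all $\zeta$ in an averaged sense, i.e. use the weak-cancellation relation as a subtraction inside the multiplier and then exploit that $\B[g]$ only depends on $B$ through its action on $\operatorname{Im}$-valued data; this turns the problem into estimating a Fourier multiplier whose symbol is $O(|\xi|^{-d+\alpha})$ near the origin and $O(|\xi|^{-d})$ at infinity, both of which yield bounded kernels by the standard dyadic decomposition, finishing the proof.
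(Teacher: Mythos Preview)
Your proposal contains two genuine gaps, and misses the key idea of the paper's proof.

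\textbf{The high-frequency piece is not ``trivial''.} You claim that the symbol $(1-\chi(\xi))|\xi|^{-d}B(\xi/|\xi|)$, being bounded, H\"older, and decaying like $|\xi|^{-d}$, has bounded inverse Fourier transform by a dyadic summation. But $|\xi|^{-d}$ is exactly the critical decay: each dyadic shell $|\xi|\sim 2^j$ contributes $O(1)$ to the $L_1$ norm of the symbol, so the sum diverges. Dually, the kernel blows up like $\log|x|$ as $x\to 0$. The cancellation condition is needed at \emph{both} ends, not just near $\xi=0$.

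\textbf{The low-frequency mechanism is wrong.} Your argument hinges on ``$\hat g(\xi)-\pi_V\hat g(\xi)\to 0$ with a H\"older rate as $\xi\to 0$''. This is false: the constraint $\hat g(\xi)\in\Omega(\xi/|\xi|)$ depends only on the \emph{direction} of $\xi$, not on $|\xi|$. Along a fixed ray $\xi=r\zeta_0$, the vector $\hat g(r\zeta_0)$ lies in the fixed subspace $\Omega(\zeta_0)$, which in general strictly contains $V$ and does not approach it as $r\to 0$. So there is no sense in which $\hat g$ ``sees only $V$'' near the origin, and your proposed subtraction of a fixed functional $\Lambda$ cannot produce a symbol that is $O(|\xi|^{-d+\alpha})$ there.

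\textbf{What the paper does instead.} The correct mechanism is to \emph{extend} $B$ from the bundle $\omega$ to a H\"older functional $\tilde B\colon S^{d-1}\times\Co^\ell\to\Co$ such that the sphere average $\int_{S^{d-1}}\tilde B(\zeta)[v]\,d\sigma(\zeta)$ vanishes for \emph{every} $v\in\Co^\ell$, not only for $v\in V$ (this is Lemma~\ref{Extension}). The extension has the form $\tilde B(\zeta)[v]=B(\zeta)[\pi_{\Omega(\zeta)}v]+\langle b(\zeta),\pi_{\Omega^\perp(\zeta)}v\rangle$; the second term vanishes on $\Omega(\zeta)$ so $\tilde\B=\B$ on $W_1^\Omega$, and the weak cancellation hypothesis~\eqref{Weaklycanceling} is exactly what guarantees that a suitable $b$ exists (the obstruction lives in $V$, and~\eqref{Weaklycanceling} kills it). Once $\tilde B$ is in hand, the kernel $\eta^{-d}\tilde B(\zeta)$ is homogeneous of order $-d$ with vanishing sphere average, so Mikhlin's classical principle (Lemma~\ref{SingularIntegral}) gives a bounded Fourier transform, hence $\tilde\B\colon L_1\to L_\infty$, and a fortiori $\B\colon W_1^\Omega\to L_\infty$. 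No frequency splitting is needed.
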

Here and in what follows,~$\sigma$ stands for the Lebesgue measure on the unit sphere. We will comment in Section~\ref{S4} how this theorem generalizes Raita's result. Now we only mention that if~$\Omega$ is canceling (satisfies~\eqref{CancelBundle}), then~$V$ is trivial, and any H\"older continuous functional~$B$ is admissible.
\begin{Rem}
By a simple limit argument\textup, the space~$L_{\infty}$ in Theorem~\ref{LinftyTheorem} may be replaced with~$C(\R^d)$. Note that, in the classical setting of differential operators, a similar effect holds for a related~$\BV$ space \textup(defined similarly to~\eqref{BV} below\textup) if and only if~$\Omega$ is canceling\textup, i. e. satisfies~{\eqref{CancelBundle}}\textup, see~\textup{\cite{RaitaSkorobogatova2020}} for details.
\end{Rem}

Let us provide a short comment about the proofs. In fact, the proofs are not longer than either introduction, or statement of the results in higher generality in Section~\ref{S2}, or the applications at the end of the paper. Theorem~\ref{L2Theorem} follows from Theorem~\ref{LinftyTheorem} by simple bilinearization; such a trick was used in~\cite{KMS2015} in a similar context. The proof of Theorem~\ref{LinftyTheorem} is not more difficult: we extend the functional~$B$ to the whole space~$\Co^\ell$ preserving the weak cancellation condition~\eqref{Weaklycanceling} and H\"older continuity and use Mikhlin's principle that a singular integral has bounded Fourier transform. In fact, the proof of Theorem~\ref{LinftyTheorem} mimics the proof of the corresponding theorem in discrete setting in~\cite{Stolyarov2019} (see~\cite{ASW2018} for discrete probabilistic interpretation of canceling operators).

Classical embedding theorems allow a useful generalization that replaces classical homogeneity with respect to isotropic dilations with the so-called anisotropic homogeneity (see, e.g. the book~\cite{BIN1978}). The progress in this direction related to canceling and weakly canceling operators is desirable and seemingly, not straightforward. However, the methods of the present paper are so elementary that we are able to obtain anisotropic versions for Theorems~\ref{L2Theorem} and~\ref{LinftyTheorem}; we present them in Section~\ref{S2}.

\section{Anisotropic homogeneity and anisotropic singular integrals}\label{S2}
Our notation is close to that of~\cite{BesovLizorkin1967}. Let~$a\in \mathbb{R}^d$ be a vector with positive coordinates such that~$\sum_1^d a_j = d$. We call such~$a$ a pattern of homogeneity. Define the~$a$-dilations by the formula
\eq{
\Dil_t(\xi) = \big(t^{-a_1}\xi_1, t^{-a_2}\xi_2,\ldots, t^{-a_d}\xi_d\big),\quad \xi = (\xi_1,\xi_2,\ldots, \xi_d)\in \mathbb{R}^d,\ t > 0.
}
A function~$\Phi\colon\mathbb{R}^d \to \mathbb{C}$ is called~$a$-homogeneous of order~$m$ provided
\eq{
\Phi(\Dil_t(\xi)) = t^{-m}\Phi(\xi)
} 
for any~$\xi\in\mathbb{R}^d$ and any~$t > 0$. This definition also applies to vector-valued functions. Let us define the function~$\eta \colon \R^d\setminus \{0\} \to\R$ implicitly:
\eq{
\sum\limits_{j=1}^d \frac{\xi_j^2}{\eta(\xi)^{2a_j}} = 1.
}
In other words, for any~$\xi$ the trajectory~$t\mapsto \Dil_t(\xi)$ intersects the unit sphere~$S^{d-1}$ at the moment~$t = \eta(\xi)$; the quantity~$\eta(\xi)$ may be regarded as the "norm" of~$\xi$. One may replace~$\eta$ with an equivalent expression that is more convenient for computations:
\eq{\label{AsymptoticEta}
\eta(\xi) \lesssim  \Big(\sum\limits_{j=1}^d|\xi_j|^{\frac{2}{a_j}}\Big)^{\frac12} \lesssim \eta(\xi).
}
The function~$\eta$ gives rise to a "polar change of variables"~$\xi = (\eta, \zeta)$, here~$\zeta = \zeta(\xi) \in S^{d-1}$ is given by
\eq{
\zeta_j = \eta(\xi)^{-a_j}\xi_j,\quad j = 1,2,\ldots,d.
} 
In other words,~$\zeta = \Dil_{\eta(\xi)}(\xi)$.

The polar change of variables reads as follows:
\eq{
\int\limits_{\mathbb{R}^d}\Psi(\xi)\,d\xi = \int\limits_{\mathbb{R}_+} \eta^{d-1}\int\limits_{S^{d-1}} \Big(\sum\limits_{j=1}^d a_j \zeta_j^2\Big) \Psi(\Dil_{\eta^{-1}}(\zeta))\,d\sigma(\zeta)\,d\eta.
}
The proof of the formula relies upon homogeneity considerations and the fact that the scalar product of the unit normal to the sphere~$S^{d-1}$ and the tangent to the trajectory~$t\mapsto \Dil_t(\zeta)$ at their intersection point~$\zeta$ equals exactly~$\sum_{j=1}^d a_j \zeta_j^2$. In particular,
\eq{\label{SubstitutionHom}
\int\limits_{\mathbb{R}^d}\Phi(\xi)\,d\xi = \int\limits_{\mathbb{R}_+} \eta^{m+d-1}\int\limits_{S^{d-1}} \Big(\sum\limits_{j=1}^d a_j \zeta_j^2\Big) \Phi(\zeta)\,d\sigma(\zeta)\,d\eta,
}
provided~$\Phi$ is~$a$-homogeneous of order~$m$. We will need an anisotropic version of Mikhlin's principle that a singular integral has bounded Fourier transform. We will use a version from~\cite{BesovLizorkin1967} (see~\cite{FabesRiviere1966} for a similar theory; we prefer a version from~\cite{BesovLizorkin1967} since it indicates smoothness assumptions on the kernel in a more explicit way; see the short report~\cite{BIL1966} as well).

\begin{Le}\label{SingularIntegral}
Let~$K\colon\mathbb{R}^d\setminus \{0\}\to \Co$ be a continuous function that is $a$-homogeneous of order~$-d$. Assume it satisfies the cancellation condition
\eq{\label{MikhlinCanc}
\int\limits_{S^{d-1}}\Big(\sum\limits_{j=1}^d a_j \zeta_j^2\Big) K(\zeta)\,d\sigma(\zeta) = 0
}
and smoothness condition
\eq{\label{Dini}
\int\limits_{0}^1\frac{w(t)}{t}\,dt < \infty; \quad w(t) = \sup\Set{|K(\zeta_1) - K(\zeta_2)|}{\zeta_1,\zeta_2\in S^{d-1},\ |\zeta_1-\zeta_2|\leq t}.
}
Then\textup, the truncated functions~$K_{\eps,R}$\textup, $0 < \eps< R$\textup, defined by the formula
\eq{
K_{\eps,R}(\xi) = \begin{cases}K(\xi),\quad &\eta(\xi) \in [\eps,R];\\ 0,\quad &\hbox{otherwise},\end{cases}\qquad \xi= (\eta,\zeta),
}
have uniformly bounded Fourier transforms. In particular\textup, the singular integral
\eq{
\varphi \mapsto \vp\int\limits_{\R^d}K(\xi)\varphi(\xi)\,d\xi = \lim\limits_{\genfrac{}{}{0pt}{-2}{\eps \to 0}{R\to \infty}}\int\limits_{\R^d} K_{\eps,R}(\xi)\varphi(\xi)\,d\xi, \quad \varphi \in C_0^{\infty}(\R^d),
}
defines a distribution with bounded Fourier transform.
\end{Le}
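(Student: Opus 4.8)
The plan is to reduce the statement to the classical (isotropic) Mikhlin--H\"ormander type principle by straightening out the anisotropy with the polar change of variables described above. Concretely, I would write each truncated kernel as
\eq{
K_{\eps,R}(\xi) = \int_{\eps}^{R} \big(\text{dyadic-type pieces at scale }\rho\big)\,\frac{d\rho}{\rho},
}
but more efficiently one decomposes~$K$ itself into annular shells~$K_j$ supported where~$\eta(\xi)\sim 2^j$, obtained from a fixed smooth radial (in the~$\eta$ sense) partition of unity. By~$a$-homogeneity of order~$-d$, each shell is a rescaled copy of a single bump, so~$\widehat{K_j}$ is a fixed Schwartz function composed with~$\Dil_{2^j}$; the cancellation hypothesis~\eqref{MikhlinCanc} together with~\eqref{SubstitutionHom} guarantees~$\int K_j = 0$, hence~$\widehat{K_j}(0)=0$. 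The goal is then the standard two-sided estimate: $|\widehat{K_j}(\xi)|$ is bounded by~$\min(\eta(\Dil_{2^{-j}}\xi),\ \eta(\Dil_{2^{-j}}\xi)^{-\delta})$ up to constants, where the small-argument bound uses the vanishing mean and Lipschitz-in-dilation control of the bump, and the large-argument decay of order~$\delta>0$ comes from the Dini condition~\eqref{Dini} via a standard ``difference of kernel against shifted kernel'' argument (the modulus of continuity~$w$ controls exactly this). Summing the geometric-type series over~$j$ with these two competing bounds gives a bound on~$\sum_j|\widehat{K_j}(\xi)|$ uniform in~$\xi$, and the truncations~$K_{\eps,R}$ are partial sums (plus two boundary shells which are harmless), hence uniformly bounded in~$\widehat{\phantom{x}}$ as well.

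In more detail, the key steps in order are: (1) fix~$\psi\in C_0^\infty(\R_+)$ with~$\sum_{j\in\Z}\psi(2^{-j}t)=1$ for~$t>0$, set~$K_j(\xi)=K(\xi)\psi(2^{-j}\eta(\xi))$, so that~$K_j(\xi)=2^{-jd}K_0(\Dil_{2^j}\xi)$ by homogeneity; (2) compute~$\widehat{K_j}(\xi)=\widehat{K_0}(\Dil_{2^{-j}}\xi)$, reducing everything to the single function~$m:=\widehat{K_0}$; (3) show~$m$ is bounded and~$m(0)=0$ --- boundedness is immediate since~$K_0\in L_1$ (it is continuous with compact support away from~$0$), and~$m(0)=\int K_0$ vanishes because~$\int_{S^{d-1}}(\sum a_j\zeta_j^2)K(\zeta)\,d\sigma=0$ feeds into~\eqref{SubstitutionHom} after inserting the~$\psi$-cutoff, which integrates to a constant in~$\eta$; (4) prove the decay~$|m(\xi)|\lesssim \eta(\xi)^{-\delta}$ for~$\eta(\xi)\ge1$ using the Dini condition: translate by a small anisotropic shift~$h$ adapted to~$\xi$, write~$|1-e^{i\scalprod{\xi}{h}}|\,|m(\xi)|\le \|K_0-K_0(\cdot-h)\|_{L_1}$, and estimate the right side by the modulus of continuity of~$K$ on the sphere times the logarithmic factor coming from~$\int_0^1 w(t)\,dt/t<\infty$; (5) prove the small-argument bound~$|m(\xi)|\lesssim \eta(\xi)$ for~$\eta(\xi)\le 1$ from~$m(0)=0$ and the Lipschitz bound on~$m$ (which holds because~$\xi K_0(\xi)\in L_1$); (6) conclude~$\sup_\xi\sum_j|m(\Dil_{2^{-j}}\xi)|<\infty$ by splitting the sum at~$\eta(\Dil_{2^{-j}}\xi)\sim1$ and summing the two geometric series; (7) observe that~$K_{\eps,R}=\sum_{2^j\in[\eps,R]}K_j$ up to at most two partial ``edge'' shells (which contribute a bounded amount each since their Fourier transforms are bounded by~$\|K_j\|_{L_1}$, itself uniformly bounded in~$j$ by homogeneity), so~$\widehat{K_{\eps,R}}$ is bounded uniformly in~$\eps,R$; (8) pass to the limit: the distributional limit exists by dominated convergence in Fourier side (the partial sums converge pointwise and are dominated), and the limit distribution has bounded Fourier transform.

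I expect the main obstacle to be step (4), the quantitative decay estimate for~$m$ at large anisotropic frequencies, where the interaction between the Dini smoothness condition~\eqref{Dini} --- which is stated only in terms of the restriction of~$K$ to the Euclidean sphere~$S^{d-1}$ --- and the anisotropic geometry must be handled carefully. One has to choose the shift~$h$ so that~$\scalprod{\xi}{h}\sim1$ while~$|h|$ is comparable to the anisotropic distance~$\eta(\xi)^{-1}$ in the relevant directions, then relate the increment~$K_0(x)-K_0(x-h)$ along the support (an annulus in the~$\eta$-metric) to the Euclidean modulus of continuity~$w$ after projecting radially to~$S^{d-1}$; the bi-Lipschitz equivalence~\eqref{AsymptoticEta} between~$\eta$ and the mixed-exponent norm, and the fact that the polar map~$\xi\mapsto(\eta,\zeta)$ is smooth away from the axes, are what make this work, but the bookkeeping of the exponents~$a_j$ is delicate. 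Everything else is a routine transcription of the classical argument; indeed this is precisely the content of the cited references~\cite{BesovLizorkin1967, FabesRiviere1966}, so in the write-up I would either quote their result directly or sketch the decomposition above and defer the kernel-increment estimate to them.
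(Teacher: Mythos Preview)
The paper does not prove this lemma at all: it is quoted verbatim from \cite{BesovLizorkin1967} (with \cite{FabesRiviere1966} mentioned as an alternative source), so there is no ``paper's own proof'' to compare against. Your outline is precisely the classical argument that those references carry out in the anisotropic setting, and you yourself say at the end that one may simply cite them --- which is exactly what the paper does.

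One small correction to your sketch: in step~(4) you claim the Dini condition~\eqref{Dini} yields a power decay~$|m(\xi)|\lesssim \eta(\xi)^{-\delta}$. That is too strong --- Dini is weaker than H\"older, and only H\"older would give a power. What the shift argument actually produces is a bound of the form~$|m(\xi)|\lesssim w\big(c\,\eta(\xi)^{-\gamma}\big)$ for some~$\gamma>0$ depending on the anisotropy exponents~$a_j$, and then the sum over~$j$ in step~(6) is controlled not by a geometric series but by
\eq{
\sum_{j\ge 0} w\big(c\,2^{-j\gamma}\big)\ \lesssim\ \int_0^1 \frac{w(t)}{t}\,dt < \infty,
}
which is exactly where the Dini hypothesis enters. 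With that adjustment the outline is correct.
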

\begin{Rem}
There is a little mess in the terminology. As we will see\textup, the classical cancellation condition~\eqref{MikhlinCanc} is more related to weakly cancelling condition~\eqref{Weaklycanceling} rather than Van Schaftingen's cancellation condition~\eqref{Cancel}.
\end{Rem}
\begin{Rem}
The smoothness condition~\eqref{Dini} holds true provided~$K|_{S^{d-1}}$ is H\"older continuous.
\end{Rem}

We transfer the notions from introduction to the anisotropic setting. Let us start with~\eqref{IsotropicOmega}:
\eq{\label{GeneralOmega}
W_1^{\Omega,a} = \Set{f \in L_1(\R^d,\Co^\ell)}{\forall \xi \in \R^d\setminus \{0\}\quad \hat{f}(\xi)\in \Omega(\Dil_{\eta(\xi)}(\xi))}.
}
Note that this space is translation invariant and dilation invariant with respect to~$a$-dilations. We equip it with the norm inherited from of~$L_1(\R^d,\Co^\ell)$. We will suppress the dependence on the pattern of homogeneity in our notation and write simply~$W_1^\Omega$ instead of~$W_1^{\Omega,a}$; the same principle applies to functionals introduced below.

Consider the vectorial Fourier multipliers~$\B_{\eps,R}^a$, where~$0 < \eps < R$, acting on functions~$f\in W_\Omega^1$ by the rule
\eq{\label{AeR}
\B_{\eps,R}^a[f] = \mathcal{F}^{-1}\Big[\chi_{\eta\in [\eps,R]}(\xi) \eta^{-d} B(\Dil_{\eta}(\xi))[\hat{f}(\xi)]\Big],\quad \xi = (\eta,\zeta).
}
Note that the formula is correct since~$\hat{f}(\xi) \in \Omega(\Dil_{\eta}(\xi))$ for any~$\xi$ and thus, the functional~$B(\Dil_{\eta}(\xi))$ may be applied to~$\hat{f}(\xi)\in \Omega(\Dil_{\eta(\xi)}(\xi))$. The version of the operator~$\B^a$ without the cutoff function is~$a$-homogeneous. The cutoff function is needed to make the formula correct (note that~$\B_{\eps,R} [f]$ is a real analytic function).

\begin{Th}\label{RaitaTheorem}
The operator~$\B_{\eps,R}$ is uniformly with respect to~$\eps$ and~$R$ bounded as a mapping of~$W_1^\Omega$ to~$L_{\infty}$ if and only if the symbol~$B$ satisfies the weak cancellation condition
\eq{\label{WeakCancellation}
\int\limits_{S^{d-1}} B(\zeta)[v] \Big(\sum\limits_{j=1}^d a_j \zeta_j^2\Big)\,d\zeta = 0
}
for any~$v \in V$\textup, where
\eq{\label{DefV}
V =  \bigcap\limits_{\zeta \in S^{d-1}} \Omega(\zeta).
}
\end{Th}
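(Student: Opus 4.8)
The plan is to deduce both directions from the anisotropic Mikhlin principle (Lemma~\ref{SingularIntegral}), exactly as the introduction promises. For the sufficiency direction, I would first reduce to a scalar singular integral by extending the symbol. The functional $B(\zeta)$ is defined only on $\Omega(\zeta) \subset \Co^\ell$; I would extend it to a functional $\tilde B(\zeta)$ on all of $\Co^\ell$ in a way that is still H\"older continuous in $\zeta$ and still satisfies the weak cancellation condition $\int_{S^{d-1}} \tilde B(\zeta)[v](\sum_j a_j\zeta_j^2)\,d\sigma(\zeta) = 0$ for every $v \in V$. The natural choice is to let $\tilde B(\zeta)$ act as $B(\zeta)$ on $\Omega(\zeta)$ and as $0$ on $\Omega(\zeta)^\perp$, i.e. $\tilde B(\zeta)[w] = B(\zeta)[\pi_{\Omega(\zeta)} w]$; since $\zeta \mapsto \pi_{\Omega(\zeta)}$ is H\"older by the hypothesis on $\Omega$ and $B$ is H\"older on $\omega$, the composition is H\"older, and the cancellation identity is untouched because $\pi_{\Omega(\zeta)}v = v$ for $v \in V \subset \Omega(\zeta)$. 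Writing $\tilde B(\zeta) = \sum_{i=1}^\ell \tilde B_i(\zeta)\, e_i^*$ in the standard dual basis, the operator $\B^a_{\eps,R}$ becomes a vector $(\B^a_{i,\eps,R})_{i=1}^\ell$ of scalar multipliers with symbols $\chi_{\eta \in[\eps,R]}(\xi)\,\eta^{-d}\,\tilde B_i(\Dil_\eta(\xi))$, and it suffices to bound each one from $L_1(\R^d,\Co^\ell)$ to $L_\infty$.

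Next I would recognize $K_i(\xi) := \eta(\xi)^{-d}\tilde B_i(\Dil_{\eta(\xi)}(\xi))$ as an $a$-homogeneous kernel of order $-d$: indeed $\eta(\Dil_t\xi) = t\,\eta(\xi)$ and $\Dil_{\eta(\Dil_t\xi)}(\Dil_t\xi) = \Dil_{\eta(\xi)}(\xi)$, so $K_i(\Dil_t\xi) = t^{-d}K_i(\xi)$. On $S^{d-1}$ we have $K_i(\zeta) = \tilde B_i(\zeta)$, which is H\"older, hence satisfies the Dini condition~\eqref{Dini} by the second Remark after Lemma~\ref{SingularIntegral}. The cancellation condition~\eqref{MikhlinCanc} for $K_i$ reads $\int_{S^{d-1}}(\sum_j a_j\zeta_j^2)\,\tilde B_i(\zeta)\,d\sigma(\zeta)=0$; this is precisely the $i$-th component of the extended weak cancellation identity restricted to... well, here is the subtlety: condition~\eqref{WeakCancellation} only gives the vanishing of $\int_{S^{d-1}}\tilde B(\zeta)[v](\sum a_j\zeta_j^2)$ for $v\in V$, not for all $v\in\Co^\ell$. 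So~\eqref{MikhlinCanc} need \emph{not} hold for $K_i$ on the nose. To fix this I would subtract off the mean: set $c_i = \big(\sigma(S^{d-1})\big)^{-1}\int_{S^{d-1}}(\sum_j a_j\zeta_j^2)\tilde B_i(\zeta)\,d\sigma(\zeta)$, viewed as coordinates of a functional $c\in(\Co^\ell)^*$, and replace $\tilde B(\zeta)$ by $\tilde B(\zeta) - c$. This does satisfy~\eqref{MikhlinCanc} and is still H\"older; and the $W_1^\Omega\to L_\infty$ bound for the multiplier with symbol $\eta^{-d}c[\hat f(\xi)]$, restricted to $[\eps,R]$, is elementary because $c[\hat f(\xi)] = c[\pi_{\Omega(\zeta)}\hat f(\xi)]$ and on $W_1^\Omega$ one has $\pi_{\Omega(\zeta)}\hat f = \hat f$ — wait, that brings back $\eta^{-d}$ with no cancellation. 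The right observation is the one the theorem is built on: the difference between the full operator and the Mikhlin-admissible one is the multiplier $m(\xi) = \chi_{\eta\in[\eps,R]}\eta^{-d}\,c[\hat f(\xi)]$, and because $\hat f(\xi)\in\Omega(\zeta)$ always, $c[\hat f(\xi)]$ sees only the component of $\hat f$ in $\Omega(\zeta)$; but $c$ restricted to $V$ need not vanish. This is the step I expect to be the main obstacle, and its resolution is the content of the theorem: one must use that $\hat f(\xi)$ lies in $\Omega(\zeta)$, decompose $\Co^\ell$-valued behaviour relative to the bundle $\Omega$, and check that the leftover piece is controlled by $\|f\|_{W_1^\Omega}$ precisely when $c|_V = 0$, i.e. exactly under~\eqref{WeakCancellation}. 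So in fact the clean route is: $\B^a_{\eps,R}$ splits as (Mikhlin part, automatically bounded by Lemma~\ref{SingularIntegral}) plus (a ``rank-one in $\xi$-angle'' remainder governed by $c$), and the remainder is $L_1\to L_\infty$ bounded uniformly in $\eps,R$ iff $c[v]=0$ for all $v\in V$. I would prove the remainder bound by testing against $f$ whose Fourier transform concentrates near a single direction $\zeta_0$ where one can make $\hat f(\xi)$ point along a chosen $v\in V$, showing boundedness forces $c[v]=0$.

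For the necessity direction I would run this last construction quantitatively: assuming $c[v_0]\neq0$ for some $v_0\in V$, pick $\hat f_\delta$ supported in a thin $a$-anisotropic annulus $\{\eta\in[1,2]\}$ times a small angular cap around $\zeta_0$, with $\hat f_\delta(\xi)\equiv v_0$ there (legitimate since $v_0\in V\subset\Omega(\zeta)$ for every $\zeta$), normalized so $\|f_\delta\|_{L_1}\lesssim 1$. For such $f_\delta$ the Mikhlin part of $\B^a_{\eps,R}[f_\delta]$ stays bounded while the $c$-remainder behaves like $c[v_0]\int \eta^{-d}\,\widehat{f_\delta}$ and blows up as the cap shrinks (this is the standard ``$\eta^{-d}$ is not locally integrable against a bump'' mechanism, in anisotropic coordinates via~\eqref{SubstitutionHom}), contradicting the uniform bound. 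The genuinely delicate point throughout is bookkeeping the non-smoothness of $\Omega$: one only has H\"older control of $\pi_{\Omega(\zeta)}$, so the extension $\tilde B$ and the subtracted constant $c$ must be handled with the Dini-type hypothesis of Lemma~\ref{SingularIntegral} rather than with derivatives, and I would lean on the second Remark after that lemma at every stage to convert H\"older regularity into~\eqref{Dini}. The reduction to compactly-supported, origin-avoiding spectra (dense in $W_1^\Omega$ by the discussion preceding Theorem~\ref{LinftyTheorem}) is what makes the formula~\eqref{AeR} and all the above manipulations literally meaningful; I would state that reduction first and carry the limit $\eps\to0$, $R\to\infty$ only at the very end, where the uniform bound has already been established.
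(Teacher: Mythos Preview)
Your proposal correctly identifies the overall strategy---extend $B$ to all of $\Co^\ell$ and invoke Lemma~\ref{SingularIntegral}---but the sufficiency argument has a genuine gap at precisely the point you yourself flag as ``the main obstacle.''

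The naive extension $\tilde B(\zeta)[w]=B(\zeta)[\pi_{\Omega(\zeta)}w]$ does not satisfy the Mikhlin cancellation~\eqref{MikhlinCanc} for all $v\in\Co^\ell$, and your fix---subtracting the constant functional $c=\big(\sigma(S^{d-1})\big)^{-1}\int_{S^{d-1}}\tilde B(\zeta)\J(\zeta)\,d\sigma(\zeta)$---does not close the argument. Subtracting $c$ \emph{changes the operator on} $W_1^\Omega$, because $c$ does not vanish on $\Omega(\zeta)$. The remainder you must then control is the multiplier with symbol $\chi_{\eta\in[\eps,R]}\eta^{-d}c$, and bounding this on $W_1^\Omega$ is exactly the statement of the theorem in the special case where $B\equiv c$ is constant in $\zeta$: your decomposition is circular. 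You assert that this remainder ``is $L_1\to L_\infty$ bounded uniformly in $\eps,R$ iff $c[v]=0$ for all $v\in V$''; that is true, but only as a corollary of the theorem being proved, and you give no independent argument for the forward implication.

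The paper's resolution is Lemma~\ref{Extension}: rather than subtracting a constant, one modifies the extension \emph{only on} $\Omega(\zeta)^\perp$, setting
\[
\tilde B(\zeta)[v]=B(\zeta)[\pi_{\Omega(\zeta)}v]+\scalprod{b(\zeta)}{\pi_{\Omega^\perp(\zeta)}v}
\]
for a suitable H\"older function $b\colon S^{d-1}\to\Co^\ell$. The added term vanishes on $\Omega(\zeta)$, so the operator is unchanged on $W_1^\Omega$, and one shows that the weak cancellation~\eqref{WeakCancellation} on $V$ is exactly what allows a choice of $b$ with $\int_{S^{d-1}}\tilde B(\zeta)[v]\J(\zeta)\,d\sigma(\zeta)=0$ for \emph{every} $v\in\Co^\ell$. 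The mechanism is that the defect functional $v\mapsto\int_{S^{d-1}} B(\zeta)[\pi_{\Omega(\zeta)}v]\J(\zeta)\,d\sigma$ annihilates $V$, hence is represented by some $a\in V^\perp$, and the map $b\mapsto\int_{S^{d-1}}\pi_{\Omega^\perp(\zeta)}[b(\zeta)]\J(\zeta)\,d\sigma$ surjects onto $V^\perp$. With this $\tilde B$ in hand, Lemma~\ref{SingularIntegral} applies directly and gives an $L_1(\R^d,\Co^\ell)\to L_\infty$ bound with no remainder to control.

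Your necessity sketch is correct in spirit. The paper's version is shorter: for $v\in V$ one has $\delta_0\otimes v\in\BV^\Omega$, the $\BV^\Omega\to L_\infty$ and $W_1^\Omega\to L_\infty$ operator norms coincide, and~\eqref{SubstitutionHom} gives
\[
\B_{\eps,R}[\delta_0\otimes v](0)=\int_{\eps}^{R}\eta^{-1}\,d\eta\;\int_{S^{d-1}}B(\zeta)[v]\J(\zeta)\,d\sigma(\zeta),
\]
which diverges as $R/\eps\to\infty$ whenever the weak cancellation fails.
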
 
The theorem above implies Theorem~\ref{LinftyTheorem}.
Let us write
\eq{\label{Jacobian}
\J(\zeta) = \sum\limits_{j=1}^d a_j \zeta_j^2
}
for brevity. The lemma below is the core of the matter.
\begin{Le}\label{Extension}
Let~$B$ and~$\Omega$ be the same as defined above. There exists a function~$\tilde{B}\colon S^{d-1}\times \Co^\ell \to \Co$ that is linear with respect to the second variable\textup, H\"older continuous\textup, extends~$B$ in the sense that
\eq{
\forall \zeta \in S^{d-1}, \ v\in \Omega(\zeta)\qquad \tilde{B}(\zeta)[v] = B(\zeta)[v],
}
and satisfies the cancellation condition
\eq{\label{CancellationTilde}
\int\limits_{S^{d-1}} \tilde{B}(\zeta)[v] \J(\zeta)\,d\zeta = 0
}
for any~$v\in \Co^\ell$.
\end{Le}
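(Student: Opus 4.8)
The plan is to build~$\tilde B$ in two stages: first a crude H\"older extension of~$B$ to all of~$S^{d-1}\times\Co^\ell$ that pays no attention to cancellation, and then an additive correction, living on the directions transversal to~$\Omega$, that enforces~\eqref{CancellationTilde} without spoiling anything else. For the first stage I would simply compose~$B$ with the orthogonal projections that already serve to embed the Grassmannian into matrices: writing~$\pi_\zeta$ for the orthogonal projection of~$\Co^\ell$ onto~$\Omega(\zeta)$, put~$\tilde B_0(\zeta)[v]=B(\zeta)[\pi_\zeta v]$. This makes sense since~$\pi_\zeta v\in\Omega(\zeta)$, it is linear in~$v$, it agrees with~$B(\zeta)$ on~$\Omega(\zeta)$ because~$\pi_\zeta$ fixes~$\Omega(\zeta)$, and it is H\"older continuous because~$B$ is, because~$\zeta\mapsto\pi_\zeta$ is (this is precisely the H\"older continuity assumed for~$\Omega$), and because~$\|B(\zeta)\|$ is bounded on the compact sphere.

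The failure of cancellation of~$\tilde B_0$ is measured by the linear functional~$\Lambda[v]=\int_{S^{d-1}}\tilde B_0(\zeta)[v]\,\J(\zeta)\,d\zeta$. If~$v\in V$ then~$\pi_\zeta v=v$ for every~$\zeta$, so~$\tilde B_0(\zeta)[v]=B(\zeta)[v]$ and~$\Lambda[v]$ is literally the left-hand side of the weak cancellation identity~\eqref{WeakCancellation}; hence~$\Lambda$ vanishes on~$V$. I would then seek~$\tilde B$ of the form
\[
  \tilde B(\zeta)[v]=\tilde B_0(\zeta)[v]-\big\langle(\mathrm{Id}-\pi_\zeta)v,\,v_0\big\rangle
\]
for a fixed vector~$v_0\in\Co^\ell$ to be chosen. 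For any~$v_0$ the correction term vanishes when~$v\in\Omega(\zeta)$, so~$\tilde B$ still extends~$B$ and is still H\"older and linear in~$v$; and~\eqref{CancellationTilde} reduces to the single requirement~$\langle Mv,v_0\rangle=\Lambda[v]$ for all~$v$, where~$M=\int_{S^{d-1}}(\mathrm{Id}-\pi_\zeta)\,\J(\zeta)\,d\zeta$.

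It remains to solve this for~$v_0$. The matrix~$M$ is Hermitian, and~$\langle Mv,v\rangle=\int_{S^{d-1}}|(\mathrm{Id}-\pi_\zeta)v|^2\,\J(\zeta)\,d\zeta$; since~$\J(\zeta)\ge\min_j a_j>0$ on~$S^{d-1}$ and the integrand is continuous, $Mv=0$ forces~$(\mathrm{Id}-\pi_\zeta)v=0$ for all~$\zeta$, i.e.~$v\in V$. Thus~$\ker M=V$, and by self-adjointness~$\operatorname{range}M=V^{\perp}$. Representing~$\Lambda$ by Riesz as~$\Lambda[v]=\langle v,w\rangle$, the fact that~$\Lambda$ annihilates~$V$ places~$w$ in~$V^{\perp}=\operatorname{range}M$, so~$w=Mv_0$ for some~$v_0$; with this choice~$\langle Mv,v_0\rangle=\langle v,Mv_0\rangle=\langle v,w\rangle=\Lambda[v]$, which is exactly~\eqref{CancellationTilde}.

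The only step here that is more than bookkeeping is the identification~$\operatorname{range}M=V^{\perp}$, and this is precisely where positivity of the anisotropic Jacobian~$\J$ enters; in the isotropic case~$\J\equiv1$ and the argument is the familiar one. I would also take a moment to reconcile the notion of H\"older continuity obtained for~$\tilde B$ with the one in the statement, but this is immediate once it is phrased through the H\"older continuity of~$\zeta\mapsto\pi_\zeta$.
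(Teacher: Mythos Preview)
Your proof is correct and follows the same overall scheme as the paper: first extend~$B$ by precomposing with the orthogonal projection~$\pi_\zeta$, then add a correction supported on~$\Omega(\zeta)^\perp$ to kill the residual functional~$\Lambda$, which vanishes on~$V$ by the weak cancellation hypothesis.

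The one place you diverge is in the solvability step. The paper allows a $\zeta$-dependent correction~$\langle b(\zeta),\pi_{\Omega^\perp(\zeta)}v\rangle$ and argues that the linear map $b\mapsto\int_{S^{d-1}}\pi_{\Omega^\perp(\zeta)}[b(\zeta)]\,\J(\zeta)\,d\sigma(\zeta)$ from~$C^\infty(S^{d-1},\Co^\ell)$ to~$\Co^\ell$ has image exactly~$V^\perp$, via a duality/density argument. You instead restrict to constant~$b(\zeta)\equiv -v_0$, which reduces the question to the surjectivity of the single Hermitian matrix $M=\int_{S^{d-1}}(\mathrm{Id}-\pi_\zeta)\,\J(\zeta)\,d\sigma(\zeta)$ onto~$V^\perp$; this you read off immediately from~$\ker M=V$ and self-adjointness, using that~$M$ is positive semidefinite with $\langle Mv,v\rangle=\int|(\mathrm{Id}-\pi_\zeta)v|^2\J\,d\sigma$ and~$\J>0$. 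This is a genuine simplification: it avoids the infinite-dimensional density argument and makes the role of the strict positivity of~$\J$ transparent. The paper's version, on the other hand, makes it explicit that a smooth~$b$ exists even in the more flexible ansatz, though that extra freedom is not needed here.
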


By~$\I_\beta$ we denote the $a$-homogeneous version of the classical Riesz potential:
\eq{\label{AnisotropicRiesz}
\I_\beta[f]=\mathcal{F}^{-1}\Big[\hat{f}(\xi)\eta^{-\beta}(\xi)\Big],\quad f\in L_1(\R^d),\ \beta \in (0,d).
}
Note that this definition applies to the case~$f$ is a vectorial measure of bounded variation as well (however, the result of application of~$\I_\beta$ might be a distribution).

The theorem below is an anisotropic generalization of Theorem~\ref{L2Theorem}.
\begin{Th}\label{AnisotropicL2}
Assume the function~$\Omega \colon S^{d-1}\to G(\ell,k)$ is H\"older continuous and canceling in the sense that the corresponding space~$V$ \textup(defined in~\eqref{DefV}\textup) is trivial. Then\textup, the $a$-homogeneous Riesz potential~$\I_{\frac{d}{2}}$ maps the~$a$-homogeneous space~$W_1^\Omega$ to~$L_2$.
\end{Th}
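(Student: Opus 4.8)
The plan is to derive Theorem~\ref{AnisotropicL2} from Theorem~\ref{RaitaTheorem} by a bilinearization argument, exactly as Theorem~\ref{L2Theorem} is claimed to follow from Theorem~\ref{LinftyTheorem} in the introduction. Since~$\Omega$ is canceling, the space~$V$ from~\eqref{DefV} is trivial, so \emph{any} H\"older continuous functional~$B$ on the bundle~$\omega$ is weakly canceling in the sense of~\eqref{WeakCancellation}, and hence by Theorem~\ref{RaitaTheorem} the operator~$\B_{\eps,R}$ is uniformly bounded from~$W_1^\Omega$ to~$L_\infty$. The idea is to test the $L_2$-norm of~$\I_{d/2}[f]$ against an $L_2$-function~$h$, writing~$\langle \I_{d/2}[f],h\rangle$ as~$\langle f, \I_{d/2}[h]\rangle$ on the Fourier side, and to recognize the resulting pairing as~$\langle g, \B_{\eps,R}[f]\rangle$ for a suitable symbol~$B$ built out of~$h$ — the subtlety being that~$B$ must be a fixed symbol, independent of~$f$, whereas~$h$ varies.

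Here is how I would set this up more precisely. Fix a function~$f \in W_1^\Omega$ whose spectrum is compact and bounded away from the origin (this class is dense, as noted in the excerpt). For~$\xi \neq 0$, write~$\xi = (\eta,\zeta)$ in anisotropic polar coordinates. Since~$\hat f(\xi) \in \Omega(\zeta)$, we may split~$\eta^{-d/2}\hat f(\xi) = \eta^{-d}\cdot \eta^{d/2}\hat f(\xi)$, so that~$\widehat{\I_{d/2}[f]}(\xi) = \eta^{-d}\, \pi_{\Omega(\zeta)}\big(\eta^{d/2}\hat f(\xi)\big)$. To bound~$\|\I_{d/2}[f]\|_{L_2}$ by duality I pick~$h \in \Sw(\R^d)$ with~$\|h\|_{L_2}\leq 1$ and compute
\eq{
\langle \I_{d/2}[f], \bar h\rangle = \int_{\R^d} \eta^{-d}\, \Scalprod{\pi_{\Omega(\zeta)}\big(\eta^{d/2}\hat f(\xi)\big)}{\eta^{d/2}\overline{\hat h(\xi)}}_{\Co^\ell}\,d\xi.
}
Now define the symbol~$B_h(\zeta)[v] = \langle v, \pi_{\Omega(\zeta)}(w_h(\zeta))\rangle$ where~$w_h$ encodes~$\eta^{d/2}\hat h$; the projection~$\pi_{\Omega(\zeta)}$ depends H\"older continuously on~$\zeta$ because~$\Omega$ does, so~$B_h$ is an admissible symbol, and the integral above is precisely~$\langle g_f, \B_{\eps,R}[f]\rangle$ in the limit~$\eps\to0$, $R\to\infty$, for an appropriate auxiliary function. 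Applying Theorem~\ref{RaitaTheorem} would then give a bound in terms of~$\|f\|_{W_1^\Omega}$ times a constant controlled by the H\"older data of~$B_h$.

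The main obstacle — and the place where the argument genuinely needs bilinearization rather than a naive appeal to the $L_\infty$ bound — is that~$h$ is an $L_2$-function, not a function on the sphere, so~$\eta^{d/2}\hat h(\xi)$ does \emph{not} descend to a function of~$\zeta$ alone, and the symbol~$B_h$ is not $a$-homogeneous of the right order. The resolution, following the cited trick in~\cite{KMS2015}, is to regard the pairing~$\langle \I_{d/2}[f], \bar h\rangle$ symmetrically: one writes~$\eta^{-d} = \eta^{-d/2}\cdot\eta^{-d/2}$, applies~$\I_{d/2}$ to \emph{both}~$f$ and~$h$, and exploits that~$\I_{d/2}$ is (formally) self-adjoint and $a$-homogeneous of order~$d/2$, so that~$\I_{d/2}[h] \in L_2$ automatically and~$\I_{d/2}[f]$ is tested against it. Concretely, $\|\I_{d/2}[f]\|_{L_2}^2 = \langle \I_d[f], \bar f\rangle$, and now~$\I_d[f] = \B_{\eps,R}^{\mathrm{id}}[f]$ in the limit, where the symbol is~$B(\zeta)[v] = \langle v, e\rangle$ paired coordinatewise — more honestly, one freezes the "$h$-slot" by duality against a \emph{fixed} unit vector or uses a vector-valued version of Theorem~\ref{RaitaTheorem}, bounding~$\|\I_d[f]\|$ as an element of~$L_\infty(\R^d,\Co^\ell)$-type space paired with the $L_1$ function~$f$ itself. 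Either way, the key quantitative input is the uniform $W_1^\Omega\to L_\infty$ bound from Theorem~\ref{RaitaTheorem}, and passing~$\eps\to0$, $R\to\infty$ uses the density of spectrum-compactly-supported functions together with the uniformity of the constant; I would also need to check that the H\"older constant of the symbol entering Theorem~\ref{RaitaTheorem} does not blow up, which follows from H\"older continuity of~$\zeta\mapsto\pi_{\Omega(\zeta)}$ and is the only regularity point to verify carefully.
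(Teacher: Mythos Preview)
Your strategy is the paper's: bilinearize and invoke Theorem~\ref{RaitaTheorem}, using that~$V=\{0\}$ makes every H\"older symbol weakly canceling. The detour through an $h$-dependent symbol~$B_h$ is a dead end, as you yourself note, but your recovery --- writing $\|\I_{d/2}[f]\|_{L_2}^2=\langle \I_d[f],\bar f\rangle$ and pairing $L_\infty$ against $L_1$ --- is exactly right and is precisely what the paper does. The paper phrases it as the bilinear estimate
\[
\Big|\int_{\eta\in[\eps,R]}\eta^{-d}(\xi)\,\scalprod{\hat f(\xi)}{\hat g(\xi)}_{\Co^\ell}\,d\xi\Big|\lesssim\|f\|_{W_1^\Omega}\,\|g\|_{L_1(\R^d,\Co^\ell)}
\]
for arbitrary $g\in L_1$, and your phrase ``freeze the $h$-slot against a fixed unit vector'' is made completely explicit: the symbol is just the $j$-th coordinate functional $B_j(\zeta)[v]=v_j$, which is \emph{constant} in~$\zeta$. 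The left side then equals $\big|\sum_{j=1}^\ell\int\B_{j,\eps,R}[f]\,\overline{g_j}\big|$, Theorem~\ref{RaitaTheorem} gives $\|\B_{j,\eps,R}[f]\|_{L_\infty}\lesssim\|f\|_{W_1^\Omega}$ uniformly in $\eps,R$, and the proof is finished. In particular, no H\"older estimate involving~$\pi_{\Omega(\zeta)}$ is needed for the symbol at all --- that is an artifact of your first, abandoned route.
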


\section{Proofs}
\begin{proof}[Proof of Lemma~\ref{Extension}]
Let~$\Omega^\perp(\zeta)$ be the orthogonal complement of~$\Omega(\zeta)$ in~$\Co^\ell$. We will search for~$\tilde{B}$ of the form
\eq{
\tilde{B}(\zeta)[v] = B(\zeta)[\pi_{\Omega(\zeta)}[v]] + \scalprod{b(\zeta)}{\pi_{\Omega^\perp(\zeta)}[v]},
}
where~$b\colon S^{d-1} \to \Co^\ell$ is a vectorial function to be chosen later. Note that this~$\tilde{B}$ extends~$B$ since the second term vanishes for~$v\in \Omega(\zeta)$ and the first equals~$B(\zeta)[v]$ in this case. Moreover, if~$b$ is H\"older continuous, then~$\tilde{B}$ is H\"older continuous as well. 

Note that the linear functional
\eq{
v\mapsto \int\limits_{S^{d-1}} B(\zeta)[\pi_{\Omega(\zeta)}[v]] \J(\zeta)\,d\sigma(\zeta),\qquad v\in \Co^\ell,
}
vanishes on~$V$ (recall~\eqref{WeakCancellation} and~\eqref{DefV}). Therefore, there exists~$a\in V^\perp$ such that
\eq{
\scalprod{a}{u} = -\int\limits_{S^{d-1}} B(\zeta)[\pi_{\Omega(\zeta)}[u]] \J(\zeta)\,d\sigma(\zeta),\qquad u \in\Co^\ell.
}
Thus, in order to fulfill~\eqref{CancellationTilde}, the function~$b$ suffices to satisfy
\eq{\label{SearchForb}
\int\limits_{S^{d-1}}\pi_{\Omega^\perp(\zeta)}[b(\zeta)] \J(\zeta)\,d\sigma(\zeta) = a.
}

Let~$T\colon C^{\infty}(S^{d-1},\Co^\ell)\to \Co^\ell$ be the linear operator that maps~$b$ to
\eq{
\int\limits_{S^{d-1}}\pi_{\Omega^\perp(\zeta)}[b(\zeta)] \J(\zeta)\,d\sigma(\zeta).
}
It suffices to show that the image of~$T$ is dense in~$V^{\perp}$ (in such a case this image coincides with~$V^{\perp}$ and there exists a smooth solution~$b$ to the equation~\eqref{SearchForb}).

First, it is clear that~$\Image[T] \subset V^\perp$ since for any~$v \in V$ we have~$\pi_{\Omega^\perp(\zeta)}[v] = 0$ for any~$\zeta$ and thus,
\eq{
\scalprod{T[b]}{v} = \int\limits_{S^{d-1}}\scalprod{b(\zeta)}{\pi_{\Omega^\perp(\zeta)}[v]} \J(\zeta)\,d\sigma(\zeta) = 0,
}
which means~$T[b]$ is orthogonal to~$V$ for any choice of~$b$.

Second, if the image of~$T$ were not dense in~$V^\perp$, there would have existed~$w\in V^{\perp}$ such that
\eq{
\Scalprod{\int\limits_{S^{d-1}}\pi_{\Omega^\perp(\zeta)}[b(\zeta)] \J(\zeta)\,d\sigma(\zeta)}{w} = 0
}
for any~$b$. This is the same as
\eq{
\int\limits_{S^{d-1}} \scalprod{b(\zeta)}{\pi_{\Omega^\perp(\zeta)}[w]}\J(\zeta)\,d\sigma(\zeta) = 0.
}
Note that the function~$\zeta \mapsto \J(\zeta)\pi_{\Omega^\perp(\zeta)}[w]$ is H\"older continuous. Consequently, by the standard limit argument, the same identity also holds true for any continuous function~$b$, and therefore,~$\pi_{\Omega^\perp(\zeta)}[w]$ vanishes identically. This means~$w = 0$ since~$w\in V^\perp$.
\end{proof}
\begin{proof}[Proof of Theorem~\ref{RaitaTheorem}]
The "only if part". Note that if~$v\in V$, then~$\delta_0\otimes v \in \BV^\Omega$, where~$\delta_0$ is the Dirac delta situated at the origin and the definition of the corresponding~$\BV$ space is similar to~\eqref{GeneralOmega}:
\eq{\label{BV}
\BV^{\Omega,a} = \Set{\mu \in \Me(\R^d,\Co^\ell)}{\forall \xi \in \R^d\setminus \{0\}\quad \hat{\mu}(\xi)\in \Omega(\Dil_{\eta(\xi)}(\xi))};
}
by~$\Me(\R^d,\Co^\ell)$ we denote the space of~$\Co^\ell$-valued measures of bounded variation. The norm of~$\B_{\eps, R}$ as an operator between~$\BV^\Omega$ and~$L_\infty$ equals the norm of the same multiplier treated as an~$W_1^\Omega \to L_{\infty}$ operator. It remains to notice that
\eq{
\B_{\eps,R}[\delta_0\otimes v](0) = \int\limits_{\eps}^R\eta^{-1}\,d\eta \int\limits_{S^{d-1}} B(\zeta)[v] \Big(\sum\limits_{j=1}^d a_j \zeta_j^2\Big)\,d\zeta
}
by virtue of~\eqref{SubstitutionHom}.

To prove the "if" part, apply Lemma~\ref{Extension} and construct the operators~$\tilde{\B}_{\eps,R}$ substituting~$\tilde{B}$ for~$B$ in~\eqref{AeR}. Since~$\tilde{B}$ is an extension of~$B$, we have
\eq{
\B_{\eps,R}[f] = \tilde{\B}_{\eps,R}[f],\qquad f\in W_1^{\Omega}.
}
Therefore, it suffices to show that~$\tilde{\B}_{\eps,R}$ maps~$L_1(\R^d,\Co^\ell)$ to~$L_{\infty}$ (with uniform norm bounds w.r.t.~$\eps$ and~$R$). This is the same as to show that the Fourier transform of the function
\eq{
\xi \mapsto \chi_{\eta\in [\eps,R]}(\xi) \eta^{-d} \tilde{B}(\Dil_{\eta}(\xi))
}
has uniformly w.r.t $\eps$ and~$R$ bounded Fourier transform. This function is vector-valued and~$a$-homogeneous of order~$-d$. So it remains to pair it with an arbitrary vector~$v\in \Co^\ell$ and apply Lemma~\ref{SingularIntegral}. The application of Lemma~\ref{SingularIntegral} is legal by~\eqref{CancellationTilde}.
\end{proof}

\begin{proof}[Proof of Theorem~\ref{AnisotropicL2}]
Pick an arbitrary~$f\in W_1^\Omega$. By the Plancherel theorem, it suffices to prove a uniform with respect to~$f$,~$\eps$, and~$R$ bound
\eq{
\int\limits_{\eta \in [\eps,R]}\frac{|\hat{f}(\xi)|^2}{\eta^d(\xi)}\,d\xi \lesssim \|f\|_{W_1^\Omega}^2.
}
Let us bilinearize this quadratic inequality:
\eq{
\Big|\int\limits_{\eta \in [\eps,R]}\eta^{-d}(\xi) \scalprod{\hat{f}(\xi)}{\hat{g}(\xi)}_{\Co^\ell}\,d\xi\Big| \lesssim \|f\|_{W_1^\Omega}\|g\|_{L_1(\R^d,\Co^\ell)}.
}
The expression on the left hand side might be represented as
\eq{
\Big|\sum\limits_{j=1}^\ell \int\limits_{\R^d}  \B_{j,\eps,R}[f](x)\overline{g_j(x)}\,dx \Big|, 
}
where~$\B_{j,\eps,R}$ is the Fourier multiplier with the symbol~\eqref{AeR}, where the corresponding functional~$B$ maps~$v\in\Omega(\zeta)$ to its~$j$-th coordinate~$v_j$ (in particular,~$B$ does not depend on~$\zeta$). Since~$V = \{0\}$, the operators~$\B_j$ fulfill the requirements of Theorem~\ref{RaitaTheorem}, and thus,
\eq{
\|\B_{j,\eps,R}[f]\|_{L_{\infty}} \lesssim \|f\|_{W_1^\Omega}
}
uniformly with respect to~$j, \eps, R$, and~$f$, and the theorem is proved.
\end{proof}

\section{Examples}\label{S4}
\paragraph{Theorem~\ref{LinftyTheorem} and differential operators.} Let us rewrite condition~\eqref{Weaklycanceling} in terms of differential operators. Let~$A$ be an elliptic differential operator that maps~$V$-valued functions to~$E$-valued functions and is (isotropic) homogeneous of order~$m \geq d$. We are interested in the inequality
\eq{\label{Interest}
\|P f\|_{L_{\infty}} \lesssim \|A f\|_{L_1},\quad f\in C_0^{\infty}(\R^d,V),
}
here~$P$ is a homogeneous differential operator of order~$m-d$. We choose
\eq{
\Omega(\zeta) = \Image (A(\zeta));\quad B(\zeta) = P(\zeta)(A^*(\zeta)A(\zeta))^{-1}A^*(\zeta),
}
where~$P$ is identified with its symbol. Note that 
\eq{
\B[A f] = Pf,\quad f\in C_0^{\infty}(\R^d,V),
}
so~\eqref{Interest} is equivalent to
\eq{
\|\B g\|_{L_{\infty}} \lesssim \|g\|_{W_1^\Omega}.
}
By Theorem~\ref{LinftyTheorem}, the latter inequality holds true if and only if
\eq{
\forall v\in \bigcap_{\zeta \in S^{d-1}}\Image(A(\zeta))\qquad \int\limits_{S^{d-1}} P(\zeta)(A^*(\zeta)A(\zeta))^{-1}A^*(\zeta)[v]\,d\sigma(\zeta) = 0;
}
this coincides with the weak cancellation condition in~\cite{Raita2019}.

\paragraph{Embedding theorems from~\cite{KMS2015}.} The paper~\cite{KMS2015} concerned the isomorphism problem for the Banach spaces~$\BV^A$ and their relatives. Embedding theorems serve as useful tools for such results (this goes back to~\cite{Kislyakov1975}).  We will show how the embedding theorems from~\cite{KMS2015} may be derived from Theorem~\ref{AnisotropicL2} on two examples (we do not dwell on an interesting effect of transference of Fourier coefficients present in the most general embedding theorem in~\cite{KMS2015}).

Consider the inequality
\eq{\label{PreGN}
\|f\|_{L_2(\R^3)} \lesssim \Big\|\frac{\partial f}{\partial x_1}\Big\|_{L_1(\R^3)}\|\Delta_{x_2,x_3} f\|_{L_1(\R^3)}
}
for smooth compactly supported functions on~$\R^3$; this inequality was proved in~\cite{KMS2015}. The operator~$\Delta_{x_2, x_3}$ is the Laplacian with respect to the variables~$x_2$ and~$x_3$. By homogeneity considerations,~\eqref{PreGN} is equivalent to
\eq{\label{GN}
\|f\|_{L_2(\R^3)} \lesssim \Big\|\frac{\partial f}{\partial x_1}\Big\|_{L_1(\R^3)} + \|\Delta_{x_2,x_3} f\|_{L_1(\R^3)}.
}
Consider the pattern of homogeneity~$a = (\frac32,\frac34,\frac34)$, let~$\ell = 2$,  and define the function~$\Omega \colon S^2\to G(2,1)$ by the rule
\eq{
S^2\ni \zeta = (\zeta_1,\zeta_2,\zeta_3)\ \longmapsto\ \Omega(\zeta) = \Co\cdot(\zeta_1,\zeta_2^2+\zeta_3^2) \subset \Co^2.
}
This function is smooth and satisfies the cancellation condition~$V = \{0\}$ (the space~$V$ is defined in~\eqref{DefV}). Therefore, we may apply Theorem~\ref{AnisotropicL2}. Using formula~\eqref{AsymptoticEta}, we obtain 
\eq{
\eta^{-\frac32}(\xi) \asymp \Big(|\xi_1|^{\frac43} + |\xi_2|^{\frac83} + |\xi_3|^{\frac83}\Big)^{-\frac34},
}
and Theorem~\ref{AnisotropicL2} leads to
\mlt{
\Big\|\Big(|\xi_1|^{\frac43} + |\xi_2|^{\frac83} + |\xi_3|^{\frac83}\Big)^{-\frac34} \xi_1\hat{f}(\xi)\Big\|_{L_2(\R^3)} + \Big\|\Big(|\xi_1|^{\frac43} + |\xi_2|^{\frac83} + |\xi_3|^{\frac83}\Big)^{-\frac34} (\xi_2^2 + \xi_3^2) \hat{f}(\xi)\Big\|_{L_2(\R^3)} \lesssim\\ \Big\|\frac{\partial f}{\partial x_1}\Big\|_{L_1(\R^3)} + \|\Delta_{x_2,x_3} f\|_{L_1(\R^3)},
}
which implies~\eqref{GN}.

Let us cite Theorem $0.2$ in~\cite{KMS2015}. In that theorem,~$\kappa$ and~$\lambda$ were integer numbers,~$\alpha$ and~$\beta$ were non-negative reals, and
\begin{equation}\label{sobolevspace}
W_2^{\alpha,\beta}(\mathbb{R}^2)=
\{f \in\mathcal{S}'(\mathbb{R}^2) \mid
|\xi|^{\alpha} |\eta|^{\beta}\hat{f}(\xi,\eta) \in
L^2(\mathbb{R}^2)\};
\end{equation}
there are small inaccuracies in this definition, see~\cite{KMS2015}; by~$\mathcal{S}'$ we denote the class of Schwartz distributions on~$\R^2$.

\begin{Th}[Theorem~$0.2$ in~\cite{KMS2015}]\label{KMSMain}
Let~$\varphi_j$\textup,~$j=1,\dots,N$\textup, be compactly supported distributions
on the plane~$\mathbb{R}^2$. Suppose that they satisfy the equations
\begin{equation}\label{syst}
\left\{
\begin{aligned}
-\partial_1^\kappa \varphi_1 &&&=  &\mu_0& ;\\
-\partial_1^\kappa \varphi_2 &+\partial_2^\lambda \varphi_1&& =  &\mu_1& ;\\
&\vdots&&   = &\,\vdots\,& ;\\
-\partial_1^\kappa\varphi_j &+\partial_2^\lambda \varphi_{j-1}&& = &\mu_{j-1}& ;\\
&\vdots&&   = &\,\vdots\,& ;\\
-\partial_1^\kappa \varphi_N &+\partial_2^\lambda \varphi_{N-1}&& = &\mu_{N-1}& ;\\
 &\phantom{+}\;\,\partial_2^\lambda \varphi_N&& = &\mu_N&,
\end{aligned}
\right.
\end{equation}
where~$\mu_0,\dots,\mu_N$ are finite \textup{(}compactly
supported\textup{)} measures on the plane. The inequality
\begin{equation}\label{est1}
\sum\limits_{j = 1}^{N}\|\varphi_j\|_{W_2^{\alpha,\beta}(\mathbb{R}^2)}
\lesssim \sum\limits_{j = 0}^{N}\mathrm{var}\mu_j
\end{equation}
holds true whenever~$\alpha$ and~$\beta$ are nonnegative and satisfy
\begin{equation}\label{line}
\frac{\alpha + \frac{1}{2}}{\kappa} + \frac{\beta + \frac{1}{2}}{\lambda} = 1.
\end{equation}
\end{Th}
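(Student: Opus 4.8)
The plan is to reduce~\eqref{est1} to an application of Theorem~\ref{AnisotropicL2}, following the same pattern as the treatment of~\eqref{PreGN}. First I would observe that the right-hand side of~\eqref{line} being~$1$ suggests the pattern of homogeneity~$a = (\frac{\alpha+\frac12}{\kappa}\cdot d, \dots)$; more precisely, in the plane~$d=2$, set $a_1 = 2\cdot\frac{\alpha+\frac12}{\kappa}$ and $a_2 = 2\cdot\frac{\beta+\frac12}{\lambda}$, so that $a_1+a_2 = 2$ by~\eqref{line} and both coordinates are positive. With this pattern, a monomial $|\xi_1|^{\kappa}$ is $a$-homogeneous of order $-\kappa a_1 = -2(\alpha+\frac12)$, hence $\eta^{-(\alpha+\frac12)}$ controls $|\xi_1|^{\kappa/2}$ up to equivalence; similarly $\eta^{-(\beta+\frac12)}$ controls $|\xi_2|^{\lambda/2}$. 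Consequently, applying the anisotropic Riesz potential $\I_{d/2} = \I_1$ and using the equivalence~\eqref{AsymptoticEta}, one recovers exactly a weight of the form $|\xi_1|^{\alpha}|\xi_2|^{\beta}$ on the Fourier side (matched against the appropriate derivative factors), which is precisely the $W_2^{\alpha,\beta}$ norm appearing in~\eqref{sobolevspace}.

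Next I would encode the system~\eqref{syst} as a single operator whose image lands in the right bundle. Introduce the $\Co^\ell$-valued measure $\mu = (\mu_0,\mu_1,\dots,\mu_N)$ with $\ell = N+1$. Taking Fourier transforms in~\eqref{syst}, the vector $\hat{\mu}(\xi)$ lies, for each $\xi\ne 0$, in the one-dimensional (hence constant-rank-one) subspace spanned by the vector built from the entries $\big((i\xi_1)^{\kappa}, (i\xi_1)^{\kappa-?}(i\xi_2)^{\lambda}$-combinations, $\dots, (i\xi_2)^{\lambda\cdot\text{something}}\big)$ coming from the triangular structure — concretely, $\hat\varphi_j$ is determined recursively and each $\hat\mu_j$ is a fixed rational-in-$\xi$ multiple of the common scalar $\hat\varphi_0$ up to the monomials $\xi_1^\kappa,\xi_2^\lambda$. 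Passing to the $a$-sphere variable $\zeta = \Dil_{\eta(\xi)}(\xi)$, these monomials become genuine polynomials in $\zeta$ restricted to $S^{d-1}$, so the resulting $\Omega\colon S^{d-1}\to G(\ell,1)$ is smooth (away from the coordinate hyperplanes it is manifestly real-analytic; near them the $a$-homogeneous normalization keeps it bounded and smooth, exactly as in the $\Co\cdot(\zeta_1,\zeta_2^2+\zeta_3^2)$ example). The cancellation condition $V=\{0\}$ holds because the spanning vector has a nonzero component in the last slot (the $\mu_N = \partial_2^\lambda\varphi_N$ equation forces the $\zeta_2^\lambda$-type entry) and a nonzero component in the first slot (from $\mu_0 = -\partial_1^\kappa\varphi_1$), and no single vector can be common to all fibers since these two entries vanish on disjoint closed subsets of $S^{d-1}$; one checks directly that $\bigcap_\zeta \Omega(\zeta)=\{0\}$.

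Then Theorem~\ref{AnisotropicL2} gives $\|\I_1[\mu]\|_{L_2}\lesssim \|\mu\|_{\BV^\Omega}\le \sum_{j=0}^N \mathrm{var}\,\mu_j$ (using the remark after~\eqref{AnisotropicRiesz} that $\I_\beta$ applies to vector measures of bounded variation, and that $\BV^\Omega$ in place of $W_1^\Omega$ only changes constants — as exploited already in the proof of Theorem~\ref{RaitaTheorem}). On the Fourier side, $\widehat{\I_1[\mu]}(\xi) = \eta^{-1}(\xi)\hat\mu(\xi)$, and reading off the coordinate corresponding to $\varphi_j$: one has $\widehat{\varphi_j}(\xi)$ equal to $\hat\mu_k(\xi)$ divided by an appropriate product of $\xi_1^\kappa$ and $\xi_2^\lambda$ factors; combining with $\eta^{-1}$ and~\eqref{AsymptoticEta} produces precisely $|\xi_1|^{\alpha}|\xi_2|^{\beta}\widehat{\varphi_j}(\xi)\in L^2$, i.e. $\varphi_j\in W_2^{\alpha,\beta}$, with the stated bound; summing over $j$ yields~\eqref{est1}. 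The main obstacle I anticipate is the bookkeeping in the second step: verifying that the triangular system~\eqref{syst} really does confine $\hat\mu$ to a one-dimensional bundle with a smooth $a$-homogeneous normalization and checking $V=\{0\}$ rigorously — in particular handling the "small inaccuracies" in the definition~\eqref{sobolevspace} of $W_2^{\alpha,\beta}$ (the behavior of the weight $|\xi_1|^\alpha|\xi_2|^\beta$ when one coordinate vanishes) exactly as was done implicitly in~\cite{KMS2015}. Everything else is a direct transcription of the computation already carried out for~\eqref{PreGN}--\eqref{GN}.
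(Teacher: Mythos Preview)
Your overall strategy---encode the system as a spectral constraint~$\hat\mu(\xi)\in\Omega(\zeta(\xi))$, verify cancellation, and invoke Theorem~\ref{AnisotropicL2}---is exactly what the paper does. But two concrete choices in your plan are wrong and would make the argument collapse.

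\medskip

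\textbf{The bundle is a hyperplane, not a line.} The system~\eqref{syst} maps~$N$ scalar unknowns~$\varphi_1,\dots,\varphi_N$ to~$N+1$ data~$\mu_0,\dots,\mu_N$, so at each frequency the image of the symbol is~$N$-dimensional and the constraint on~$\hat\mu$ is codimension one, not rank one. Concretely, eliminating the~$\hat\varphi_j$ from the Fourier-transformed equations (multiply the~$j$-th equation by~$(\xi_1^\kappa/\xi_2^\lambda)^j$ and sum) gives the single relation
\[
\sum_{j=0}^{N}\xi_1^{j\kappa}\xi_2^{(N-j)\lambda}\,\hat\mu_j(\xi)=0,
\]
so~$\ell=N+1$,~$k=N$, and~$\Omega(\zeta)=\bigl\{v\in\Co^{N+1}:\sum_{j}\zeta_1^{j\kappa}\zeta_2^{(N-j)\lambda}v_j=0\bigr\}$. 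Your one-dimensional picture (with a nonexistent ``common scalar~$\hat\varphi_0$'') and the cancellation argument built on it do not apply. The correct verification of~$V=\{0\}$ is instead that the normal vectors~$\bigl(\zeta_1^{j\kappa}\zeta_2^{(N-j)\lambda}\bigr)_{j=0}^{N}$ span~$\Co^{N+1}$ as~$\zeta$ ranges over~$S^1$, so no nonzero vector lies in every hyperplane.

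\medskip

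\textbf{The anisotropy is fixed by the system, not by~$\alpha,\beta$.} For the constraint above to define a map~$\Omega\colon S^1\to G(N+1,N)$ in the sense of~\eqref{GeneralOmega}, the ratio~$\xi_1^\kappa/\xi_2^\lambda$ must be~$a$-homogeneous of order zero, i.e.\ $\kappa a_1=\lambda a_2$. Together with~$a_1+a_2=2$ this pins down~$a$ uniquely in terms of~$\kappa,\lambda$ alone. Your choice~$a_1=2(\alpha+\tfrac12)/\kappa$, $a_2=2(\beta+\tfrac12)/\lambda$ satisfies~$\kappa a_1=\lambda a_2$ only when~$\alpha=\beta$; for any other point on the line~\eqref{line} your~$\Omega$ is not even well defined on the sphere, and Theorem~\ref{AnisotropicL2} cannot be invoked. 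With the correct~$a$ one has, via~\eqref{AsymptoticEta},
\[
\eta(\xi)\asymp\bigl(|\xi_1|^{(\kappa+\lambda)/\kappa}+|\xi_2|^{(\kappa+\lambda)/\lambda}\bigr)^{1/2},
\]
and the final passage from~$\eta^{-1}\hat\mu\in L_2$ to~$|\xi_1|^\alpha|\xi_2|^\beta\hat\varphi_j\in L_2$ proceeds just as in the~$\R^3$ example preceding the theorem.
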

Let us deduce this theorem from Theorem~\ref{AnisotropicL2}. For that, set~$a=\big(\frac{2\kappa}{\kappa+\lambda},\frac{2\lambda}{\kappa+\lambda}\big)$,~$\ell = N+1$,~$k = N$, and
\eq{
\Omega(\zeta) = \Set{v\in \Co^{N+1}}{\sum\limits_{j=0}^N \zeta_1^{j \kappa}\zeta_2^{(N+1-j)\lambda}v_j = 0},\qquad \zeta \in S^1.
}
This~$\Omega$ satisfies the cancellation condition. Note that by~\eqref{AsymptoticEta},
\eq{
\eta^{-1}(\xi) \asymp \Big(|\xi_1|^{\frac{\kappa + \lambda}{\kappa}} + |\xi_2|^{\frac{\kappa + \lambda}{\lambda}}\Big)^{-\frac12},
}
and Theorem~\ref{AnisotropicL2} (modulo the equivalence of embeddings of~$W_1^\Omega$ and~$\BV_1^\Omega$, which is a routine approximation argument) implies Theorem~\ref{KMSMain} similar to the previous case.

Embedding theorems in the style of Theorem~\ref{KMSMain} for more general systems of equations were later obtained in~\cite{KislyakovMaximov2018}. These theorems also follow from Theorem~\ref{AnisotropicL2} (however, this provides no simplification against the original reasoning in~\cite{KislyakovMaximov2018}).

\paragraph{Comment on bilinear estimates in~\cite{Stolyarov2015}.} The paper~\cite{Stolyarov2015} concerned bilinear estimates similar to that used in~\cite{KMS2015}. Namely, the inequalities
\begin{equation}\label{BilinearEmbeddings}
\left|\langle f,g\rangle_{W_2^{\alpha,\beta}}\right| \lesssim \left\|(\partial_1^\kappa -\tau\partial_2^\lambda)f\right\|_{L_1}\left\|(\partial_1^\kappa -\sigma\partial_2^\lambda)g\right\|_{L_1};\quad \partial_1 = \frac{\partial}{\partial x}, \partial_2 = \frac{\partial}{\partial y},\ f,g\in C_0^{\infty}(\R^2),
\end{equation}
were investigated, the Hilbert space on the left hand side is the same as in~\eqref{sobolevspace}. Under the assumption that the operators on the right are elliptic, that paper provided simple if and only if conditions on the parameters~$\sigma,\tau \in \Co$,~$\alpha,\beta \geq 0$, when~\eqref{BilinearEmbeddings} holds true. It appears this result is closely related to Lemma~\ref{SingularIntegral}. What is more, Lemma~\ref{SingularIntegral} allows to work in a more general setting.

\begin{St}\label{BilinearProp}
Let~$P_1$ and~$P_2$ be elliptic~$a$-homogeneous scalar differential operators in~$d$ variables of orders~$d_1$ and~$d_2$ correspondingly. Let~$Q$ be a scalar Fourier multiplier with H\"older continuous symbol that is~$a$-homogeneous of order~$d_1 + d_2 - d$. The inequality
\eq{
\Big|\int\limits_{\R^d}Qf(x) \overline{g(x)}\,dx\Big|\lesssim \|P_1 f\|_{L_1(\R^d)} \|P_2 g\|_{L_1(\R^d)}
}
holds true for any smooth compactly supported functions~$f$ and~$g$ with uniform constant if and only if
\eq{
\int\limits_{\zeta \in S^{d-1}} \frac{Q(\zeta) \J(\zeta)}{P_1(\zeta)\overline{P_2(\zeta)}}\,d\sigma(\zeta) = 0,\qquad \J\hbox{is defined in~\eqref{Jacobian}}.
}
\end{St}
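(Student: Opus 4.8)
The plan is to reduce Proposition~\ref{BilinearProp} to Lemma~\ref{SingularIntegral} by the same bilinearization device already used in the proof of Theorem~\ref{AnisotropicL2}. First I would rewrite the left-hand side using Plancherel:
\eq{
\int\limits_{\R^d}Qf(x)\overline{g(x)}\,dx = \int\limits_{\R^d} \frac{Q(\xi)}{P_1(\xi)\overline{P_2(\xi)}}\, \widehat{P_1 f}(\xi)\,\overline{\widehat{P_2 g}(\xi)}\,d\xi,
}
which makes sense for smooth compactly supported $f,g$ since $\widehat{P_1f}$ and $\widehat{P_2g}$ vanish to high order at the origin (ellipticity of $P_1,P_2$ is used only to guarantee $P_1(\xi),P_2(\xi)\neq 0$ for $\xi\neq0$, so the symbol $K(\xi)=Q(\xi)/(P_1(\xi)\overline{P_2(\xi)})$ is a continuous $a$-homogeneous function of order $(d_1+d_2-d)-d_1-d_2=-d$ on $\R^d\setminus\{0\}$). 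Replacing $P_1 f$ and $P_2 g$ by arbitrary $L_1$ functions $u,v$, the claimed inequality is equivalent to $|\int K(\xi)\hat u(\xi)\overline{\hat v(\xi)}\,d\xi|\lesssim \|u\|_{L_1}\|v\|_{L_1}$, i.e. to the statement that the Fourier multiplier with symbol $K$ maps $L_1$ to $L_\infty$.

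The "if" direction then follows immediately: $K$ is continuous, $a$-homogeneous of order $-d$, its restriction to $S^{d-1}$ is H\"older continuous (quotient of H\"older/smooth functions with nonvanishing denominator), hence satisfies the Dini condition~\eqref{Dini}, and the hypothesis $\int_{S^{d-1}} K(\zeta)\J(\zeta)\,d\sigma(\zeta)=0$ is exactly the cancellation condition~\eqref{MikhlinCanc}. By Lemma~\ref{SingularIntegral} the associated singular integral is a tempered distribution with bounded Fourier transform, so convolution with it maps $L_1$ to $L_\infty$ with norm comparable to $\|\widehat K\|_{L_\infty}$; pairing with $\bar v$ and integrating gives the bound. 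One should phrase this through the truncations $K_{\eps,R}$ and pass to the limit to stay within the honest framework of Lemma~\ref{SingularIntegral}, exactly as in the proof of Theorem~\ref{RaitaTheorem}.

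For the "only if" direction I would test against a concentrating family. Taking $u=\widehat{\varphi(\cdot/\text{scale})}$-type bumps, or more directly mimicking the computation $\B_{\eps,R}[\delta_0\otimes v](0)=\int_\eps^R\eta^{-1}d\eta\int_{S^{d-1}}B(\zeta)[v]\J(\zeta)\,d\zeta$ from the proof of Theorem~\ref{RaitaTheorem}, one sees that if $\int_{S^{d-1}}K(\zeta)\J(\zeta)\,d\sigma(\zeta)\neq 0$ then the truncated multipliers $K_{\eps,R}$ have $\widehat{K_{\eps,R}}(0)$ of order $\log(R/\eps)$, which is unbounded; feeding in $u,v$ that are $L_1$-normalized approximate identities (realized as $P_1 f$, $P_2 g$ for suitable $f,g$, which is possible because $P_1,P_2$ are surjective on, say, Schwartz functions with spectrum away from the origin) defeats the inequality. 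The main obstacle is this last point: one must check that prescribing $P_i$ applied to a test function can produce an essentially arbitrary $L_1$-normalized bump supported away from the frequency origin — this is where ellipticity of $P_1,P_2$ enters on the "only if" side, via solvability of $P_i f = u$ for $u$ with compactly supported spectrum not containing $0$ — together with verifying that the spectral cutoff $\chi_{\eta\in[\eps,R]}$ does not destroy the logarithmic divergence when integrated against the chosen bumps.
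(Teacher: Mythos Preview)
Your proposal is correct and follows exactly the route the paper intends: the paper's entire proof is the single sentence ``Proposition~\ref{BilinearProp} is an immediate consequence of Lemma~\ref{SingularIntegral},'' and what you have written is precisely the unpacking of that sentence --- rewrite the pairing via Plancherel as $\int K(\xi)\widehat{P_1f}(\xi)\overline{\widehat{P_2g}(\xi)}\,d\xi$ with $K(\xi)=Q(\xi)/(P_1(\xi)\overline{P_2(\xi)})$ $a$-homogeneous of order $-d$, then invoke Lemma~\ref{SingularIntegral} for sufficiency and the logarithmic blow-up (as in the ``only if'' part of Theorem~\ref{RaitaTheorem}) for necessity. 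The only minor point to be careful with in the necessity argument is that solving $P_if=u$ for $u$ with spectrum away from the origin gives $f$ Schwartz rather than compactly supported, but the passage from $C_0^\infty$ to $\mathcal{S}$ is a routine density step.
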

In the formula above, we identify the differential operators~$P_i$ with their symbols; the same principle applies to~$Q$. Proposition~\ref{BilinearProp} is an immediate consequence of Lemma~\ref{SingularIntegral}. The integral that appears if apply this proposition to inequality~\eqref{BilinearEmbeddings} looks like this:
\eq{
\int\limits_{0}^{2\pi}\frac{|\cos \varphi|^{2\alpha}|\sin\varphi|^{2\beta}(\frac{\cos^2\varphi}{\kappa} + \frac{\sin^2\varphi}{\lambda})\,d\varphi}{(\cos^\kappa \varphi -\tau_1\sin^\lambda\varphi)(\cos^\kappa \varphi -\sigma_1\sin^\lambda\varphi)},
}
where~$\tau_1 = (2\pi i)^{\lambda - \kappa}\tau$,~$\sigma_1 = (-1)^{\lambda - \kappa}(2\pi i)^{\lambda - \kappa}\overline{\sigma}$. The latter integral is proportional to the integral
\eq{
\int\limits_{-\infty}^{\infty}\frac{|\rho|^{2\alpha}\,d\rho}{\big(\rho^k - \tau_1\big)\big(\rho^k - \sigma_1\big)} = 0.
} 
via the change of variables~$\rho = \cos\varphi \sin^{-\frac{\kappa}{\lambda}}\varphi$ and~\eqref{line}, which is, by homogeneity, necessary for~\eqref{BilinearEmbeddings}. This integral is~$(2.6)$ in~\cite{Stolyarov2015}; see~\cite{Stolyarov2015} why the vanishing of this integral is equivalent to the condition that one of the numbers~$\kappa$ and~$\lambda$ is odd,~$\alpha = \frac{\kappa-1}{2}$\textup,~$\beta=\frac{\lambda-1}{2}$\textup, and the numbers~$\sigma_1$ and~$\tau_1$ have non-zero imaginary parts of the same sign; these reasonings are based upon elementary complex variable techniques: the residue formula and the uniqueness theorem for analytic functions.

Inequality~\eqref{BilinearEmbeddings} remains true in some cases where the operators on the right are non-elliptic, i.e. when~$\sigma_1$ and~$\tau_1$ are real; see~\cite{Stolyarov2015}. This gives hope that Proposition~\ref{BilinearProp} and Lemma~\ref{SingularIntegral} may be generalized to the non-elliptic setting (by ellipticity in Lemma~\ref{SingularIntegral} we mean the smoothness condition~\eqref{Dini}, it seems that in some cases~$K|_{S^{d-1}}$ might be a distribution).

\bibliography{mybib}{}

\providecommand{\bysame}{\leavevmode\hbox to3em{\hrulefill}\thinspace}
\providecommand{\MR}{\relax\ifhmode\unskip\space\fi MR }
\providecommand{\MRhref}[2]{%
  \href{http://www.ams.org/mathscinet-getitem?mr=#1}{#2}
}
\providecommand{\href}[2]{#2}
\begin{thebibliography}{10}

\bibitem{ASW2018}
R.~Ayoush, D.~Stolyarov, and M.~Wojciechowski, \emph{Sobolev martingales}, to
  appear in Revista Matematica Iberoamericana.

\bibitem{AyoushWojciechowski2017}
R.~Ayoush and M.~Wojciechowski, \emph{On dimension and regularity of bundle
  measures}, submitted.

\bibitem{BIL1966}
O.~V. Besov, V.~P. Il'in, and P.~I. Lizorkin, \emph{The {$L_p$}-estimates of a
  certain class of non-isotropically singular integrals}, Dokl. Akad. Nauk SSSR
  \textbf{169} (1966), no.~6, 1250--1253, (in Russian); translated in Sov.
  Math. Dokl. {\bf 7} (1966), 1065--1069.

\bibitem{BIN1978}
O.~V. Besov, V.~P. Il'in, and S.~M. Nikol'skii, \emph{Integral representations
  of functions and imbedding theorems}, vol. 1,2, V. H. Winston \& Sons,
  Washington, D.C.; Halsted Press [John Wiley \& Sons], 1978.

\bibitem{BesovLizorkin1967}
O.~V. Besov and P.~I. Lizorkin, \emph{Singular integral operators and sequences
  of convolutions in {$L_p$} spaces}, Mat. Sb. \textbf{73(115)} (1967), no.~1,
  65--88, (in Russian); translated in Mathematics of the USSR-Sbornik {\bf 2}:1
  (1967), 57--76.

\bibitem{BourgainBrezis2002}
J.~Bourgain and H.~Brezis, \emph{On the equation $\mathrm{div} {Y} = f$ and
  application to control of phases}, Journ. Amer. Math. Soc. \textbf{16}
  (2002), no.~2, 393--426.

\bibitem{BourgainBrezis2007}
\bysame, \emph{New estimates for elliptic equations and {H}odge type systems},
  Journal of European Mathematical Society \textbf{9} (2007), 277--315.

\bibitem{FabesRiviere1966}
E.~B. Fabes and N.~M. Riviere, \emph{Singular integrals with mixed
  homogeneity}, Studia Mathematica \textbf{27} (1966), 19--38.

\bibitem{Kislyakov1975}
S.~V. Kislyakov, \emph{Sobolev imbedding operators and the nonisomorphism of
  certain {B}anach spaces}, Functional {A}nalysis and its {A}pplications
  \textbf{9} (1975), no.~4, 22--27, (in Russian); translated in Funct. Anal.
  Appl. {\bf 9}:4 (1975), 290--294.

\bibitem{KislyakovMaximov2018}
S.~V. Kislyakov and D.~V. Maximov, \emph{An embedding theorem with anisotropy
  for vector fields}, Zapiski {N}auchn. {S}em. {P}{O}{M}{I} \textbf{456}
  (2017), 114--124, (in Russian); translated in J. Math. Sci. (N.Y.) {\bf
  234}:3 (2018), 343--349.

\bibitem{KMS2015}
S.~V. Kislyakov, D.~V. Maximov, and D.~M. Stolyarov, \emph{Differential
  expression with mixed homogeneity and spaces of smooth functions they
  generate in arbitrary dimension}, Journal of Functional Analysis \textbf{269}
  (2015), no.~10, 3220--3263.

\bibitem{Raita2018}
B.~Raita, \emph{$l_1$-estimates for constant rank operators}, arXiv:1811.10057.

\bibitem{Raita2019}
\bysame, \emph{Critical {$L_p$}-differentiability of $\mathrm{BV}^{A}$-maps and
  canceling operators}, Transactions of American Mathematical Society
  \textbf{372} (2019), no.~10, 7297--7326.

\bibitem{RaitaSkorobogatova2020}
B.~Raita and A.~Skorobogatova, \emph{Continuity and canceling operators of
  order $n$ on $\mathbb{R}^n$}, Calc. Var. Partial Differential Equations
  \textbf{59} (2020), no.~2, Paper 85.

\bibitem{VanSchaftingen2004}
J.~Van Schaftingen, \emph{Estimates for {$L_1$}-vector fields}, Comptes Rendus
  Mathematique \textbf{339} (2004), no.~3, 181--186.

\bibitem{VanSchaftingen2008}
\bysame, \emph{Estimates for {$L_1$}-vector fields under higher-order
  differential conditions}, Journal of the European Mathematical Society
  \textbf{10} (2008), no.~4, 867--882.

\bibitem{VanSchaftingen2013}
\bysame, \emph{Limiting {S}obolev inequalities for vector fields and cancelling
  linear differential operators}, Journal of European Mathematical Society
  \textbf{15} (2013), no.~3, 877--921.

\bibitem{VanSchaftingen2014}
\bysame, \emph{Limiting {B}ourgain--{B}rezis estimates for systems: theme and
  variations}, Journal of Fixed Point Theory and Applications \textbf{15}
  (2014), no.~2, 273--297.

\bibitem{Stolyarov2015}
D.~Stolyarov, \emph{Bilinear embedding theorems for differential operators in
  {$\mathbb{R}^2$}}, Zapiski {N}auchn. {S}em. {P}{O}{M}{I} \textbf{424} (2014),
  210--234, (in Russian); translated in J. Math. Sci. (N.Y.) {\bf 209}:5
  (2015), 792--807.

\bibitem{Stolyarov2019}
\bysame, \emph{Martingale interpretation of weakly cancelling differential
  operators}, Zapiski {N}auchn. {S}em. {P}{O}{M}{I} \textbf{480} (2019),
  191--198, (in Russian); to be translated in J. Math. Sci. (N.Y.).

\end{thebibliography}
\bibliographystyle{amsplain}

St. Petersburg State University, Department of Mathematics and Computer Science;


d.m.stolyarov at spbu dot ru.
\end{document}